\documentclass[12pt,UTF8]{amsart}
\usepackage{graphicx}
\usepackage{amsmath,amscd,amssymb,amsfonts,latexsym,wasysym,mathrsfs,mathtools,hhline,color}
\usepackage{geometry}
\usepackage[pagebackref=true,colorlinks=true, linkcolor=cyan,citecolor=cyan,urlcolor=cyan]{hyperref}

\usepackage{enumerate,nicematrix}
\usepackage{tikz}

\newtheorem{theorem}{Theorem}[section]
\newtheorem{lemma}[theorem]{Lemma}
\newtheorem{prop}[theorem]{Proposition}
\newtheorem{cor}[theorem]{Corollary}

\theoremstyle{definition}
\newtheorem{defn}[theorem]{Definition}
\newtheorem{que}[theorem]{Question}
\newtheorem{remark}[theorem]{Remark}

\newtheorem{conjecture}[theorem]{Conjecture}
\newtheorem{ex}[theorem]{Example}
\newtheorem*{ex*}{Example}

\newcommand{\R}{\mathbb{R}}

\newcommand{\C}{\mathbb{C}}
\newcommand{\CP}{\mathbb{CP}}
\newcommand{\Z}{\mathbb{Z}}
\newcommand{\Hom}{\mathrm{Hom}}

\newcommand{\K}{\mathbb{K}}

\newcommand{\sL}{\mathcal{L}}
\newcommand{\sA}{\mathcal{A}}
\newcommand{\sF}{\mathcal{F}}
\newcommand{\sC}{\mathcal{C}}

\newcommand{\rk}{\mathrm{rank}}
\newcommand{\sep}{\mathrm{Sep}}
\newcommand{\ch}{\mathrm{ch}}
\newcommand{\bch}{\mathrm{bch}}
\newcommand{\uch}{\mathrm{uch}}
\newcommand{\gal}{\mathrm{Gal}}
\newcommand{\im}{\mathrm{Im }}

\title{The cohomology groups of finite cyclic covers of complexified real arrangement complements}

\author{Wentao Xie }
\address{College of Mathematics and Computer Science, Gannan Normal University, Ganzhou 341000, China}
\email{xwt@mail.ustc.edu.cn}
\date{\today}

\begin{document}

\maketitle

\begin{abstract}
In this paper, we study the cohomology groups of finite cyclic covers of complexified real arrangement complements. 
An open problem is whether the torsion in the (co)homology of finite covering spaces of hyperplane arrangement complements, including the classical Milnor fiber, is combinatorially determined. 
Using the chamber cochain complex constructed by Yoshinaga, 
we obtain explicit upper bounds for the Betti numbers of these covers over arbitrary fields. 
Furthermore, we introduce a combinatorial condition analogous to the Cohen-Dimca-Orlik (CDO) condition. 
Under this condition, we prove that the integral cohomology groups of the finite cyclic covers are torsion-free.
This partially generalizes the recent results on complex line arrangements obtained by the author and Liu \cite[Theorem 1.3]{LX26}.  

\end{abstract}

\section{Introduction}
An affine hyperplane arrangement $\sA$ is a finite set of hyperplanes in an complex affine  space $\C^l$. 
The intersection lattice of a hyperplane arrangement is the poset of intersections of hyperplanes.  
There are several viewpoints to study hyperplane arrangements, such as topology and combinatorics. 
An important question in hyperplane arrangements is whether the topological invariants of the complement of arrangement are determined by combinatorial data (i.e., the intersection lattice). 
Classical results by Orlik and Solomon \cite{OS80} state that the homology groups and the cohomology ring of the complement are completely determined by the intersection lattice. 
Furthermore, the (co)homology of the complement is torsion-free due to its minimal CW complex structure \cite{DP03, Ran02}. However, Rybnikov \cite{Ryb11} proved that the fundamental group of the complement is not combinatorially determined.

Another open question is whether the (co)homology groups and the cohomology ring of a finite cyclic cover of a hyperplane arrangement complement are combinatorially determined.
This includes the Milnor fiber of a hyperplane arrangement, which is a covering space of the complement of $\sA$. 
Cohen--Dimca--Suciu \cite[Theorem 1.1]{CDS03} gave examples of multiarrangements whose Milnor fiber contains torsion in its first homology group. 
Denham--Suciu \cite[Theorem 1.1]{DS14} proved that for every prime $p$, there is a hyperplane arrangement whose Milnor fiber has non-trivial $p$-torsion in a higher-degree homology group. 
Yoshinaga \cite[Theorem 1.2]{Yos20} gave the first example of a hyperplane arrangement
whose Milnor fiber contains 2-torsion in its first homology group. 
These results show that the (co)homology groups of Milnor fiber may have torsion and provide a negative answer to the question of whether the (co)homology groups of the Milnor fiber of a hyperplane arrangement are torsion-free \cite[Question 7.3]{DN04}, \cite[Problem 7]{Ran11}. 
Denham--Suciu raised the following question:
\begin{que}{\cite[Question 1.2]{DS14}}
\label{Milnorfibertor}
Is the torsion in the (co)homology of the Milnor fiber of a hyperplane arrangement combinatorially determined?
\end{que}

This question naturally extends to finite cyclic covers.
In fact, the cohomology groups of finite cyclic covers of the complement are intimately related to local system cohomology. 
Firstly, we recall the definition of a general rank one local system on the complement of arrangement.
Assume that $\sA=\{H_1,\dots,H_n\}$ and $\overline{H}_\infty$ is the hyperplane at infinity.
Let $U$ be the complement of the arrangement $\sA$. 
Given a commutative ring $R$ with identity, 
Note that a rank one $R$-local system $\sL$ on $U$ corresponds to a representation of the fundamental group: $\rho_\sL:\pi_1(U)\to R^\times$. 
Since $R$ is a commutative ring, this homomorphism factors through $H_1(U,\Z) \to R^\times$. 
It is well known that $H_1(U,\Z)$ is a free $\Z$-module generated by $[\gamma_i]$, where $[\gamma_i]$ is the image of the meridian $\gamma_i$. 
Then $\sL$ determines an $n$-tuple $(q_1,\dots,q_n)\in (R^\times)^n$, where $q_i=\rho_\sL(\gamma_i)$. 
For each $X\in L(\sA)$, let $q_X=\prod_{H_i\supset X}q_i$ and $q_\infty=\prod_{i=1}^n q_i^{-1}$, which is the local monodromy around the hyperplane at infinity.

\begin{defn}[CDO-condition]
We say that $\sL$ satisfies the CDO-condition if $q_X\neq 1$ for each dense edge $X\subset \overline{H}_\infty$.
\end{defn}

The CDO-condition provides a purely combinatorial criterion for determining the local system cohomology of arrangement complements. 
The following theorem is proved by Cohen--Dimca--Orlik \cite{CDO03}. 
\begin{theorem}[\cite{CDO03}]
If a rank one $\C$-local system $\sL$ satisfies the CDO-condition, then
$$
H^k(U,\sL)=\left\{ 
\begin{aligned}
&\C^{|\chi(U)|} &(k=l), \\
&0   & (k\neq l),
\end{aligned}\right.$$
where $\chi(U)$ is Euler characteristic of $U$. 

\end{theorem}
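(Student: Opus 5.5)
We would prove the theorem of Cohen--Dimca--Orlik by compactifying and pushing forward, so that the CDO-condition becomes a local acyclicity statement along the hyperplane at infinity. Put $\overline{\sA}=\sA\cup\{\overline{H}_\infty\}$ in $\CP^l$, so that $U=\CP^l\setminus\bigcup\overline{\sA}$. Let $\pi:Y\to\CP^l$ be the wonderful (minimal De Concini--Procesi) blow-up along the dense edges of $\overline{\sA}$. Then $D=\pi^{-1}(\bigcup\overline{\sA})$ is a normal crossing divisor, and its components $D_X$ are indexed by dense edges $X$. The monodromy of $\sL$ around $D_X$ is $q_X$, where for $X\subset\overline{H}_\infty$ the product includes $q_\infty$; by the product relation this equals the product of $q_i^{-1}$ over the hyperplanes not containing $X$. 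Let $j:U\hookrightarrow Y$ be the inclusion.

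\textbf{Step 1: Artin vanishing.} Since $U$ is a smooth affine variety of dimension $l$, it is Stein and has the homotopy type of an $l$-dimensional CW complex. Hence $H^k(U,\sL)=0$ for $k>l$, and dually the compactly supported cohomology satisfies $H^k_c(U,\sL)=0$ for $k<l$.

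\textbf{Step 2: Compare $j_!$ and $Rj_*$.} Near a point of $D$ lying in components $D_{X_1},\dots,D_{X_s}$, the stalk of $Rj_*\sL$ is the cohomology of a torus $(S^1)^s$ with rank one local system whose monodromies are $q_{X_1},\dots,q_{X_s}$. This stalk vanishes as soon as some $q_{X_i}\neq1$. We want the natural map $j_!\sL\to Rj_*\sL$ to be an isomorphism, which would give
\[
H^k(U,\sL)=H^k(Y,Rj_*\sL)\cong H^k(Y,j_!\sL)=H^k_c(U,\sL).
\]
Combined with Step 1 and Poincaré duality, this forces vanishing for all $k\neq l$.

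\textbf{Step 3: The main obstacle.} The CDO-condition only controls the components over $\overline{H}_\infty$. Components $D_X$ with $X\not\subset\overline{H}_\infty$ may have trivial monodromy, so the argument in Step 2 needs modification. We would first try to show that every point of $D$ over $\overline{H}_\infty$ lies on some $D_X$ with $X\subset\overline{H}_\infty$ dense. This holds because the strict transform of $\overline{H}_\infty$ is itself one such component, with $q_\infty\neq1$. Consequently $Rj_*\sL$ has zero stalks along $\pi^{-1}(\overline{H}_\infty)$.

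Next factor $j$ as $U\xrightarrow{a}V\xrightarrow{b}Y$, where $V=Y\setminus\pi^{-1}(\overline{H}_\infty)$, which is $\C^l$ up to blow-ups in the affine part. Then $Rj_*\sL=b_!Ra_*\sL$. We cannot compare with $j_!$ directly; instead we use this factorization to compute $H^k(U,\sL)=H^k_c(V,Ra_*\sL)$.

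Now $V$ maps properly onto $\C^l$, and compactly supported cohomology of a constructible complex on $\C^l$ vanishes in degrees below $l$ when the complex is perverse-like. Indeed, $Ra_*\sL[l]$ is perverse, since $a$ is an affine open embedding. So $R\pi_!$ of it is a perverse sheaf on $\C^l$ by properness and semismallness, or alternatively one works directly with the affine embedding $U\hookrightarrow\C^l$. Artin vanishing for compact supports on affine $\C^l$ then gives $H^k=0$ for $k<l$.

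Together with Step 1 this yields concentration in degree $l$. The dimension is then $|\chi(U)|$, because the Euler characteristic of a rank one local system equals $\chi(U)$ and $(-1)^l\chi(U)\ge0$.
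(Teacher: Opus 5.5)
The paper gives no proof of this statement; it quotes it from \cite{CDO03} as background. Your Step 3 is in substance the original sheaf-theoretic argument of Cohen--Dimca--Orlik, and its core is sound. Once $Rj_*\sL$ vanishes on $\pi^{-1}(\overline{H}_\infty)$, you have $Rj_*\sL=b_!Ra_*\sL$. Hence $H^k(U,\sL)\cong H^k_c(V,Ra_*\sL)\cong H^k_c(\C^l,R\iota_*\sL)$ with $\iota:U\hookrightarrow\C^l$. Artin vanishing with compact supports for the perverse sheaf $R\iota_*\sL[l]$ on the affine space $\C^l$, together with Step 1, leaves only degree $l$, whose dimension is then $|\chi(U)|$.

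However, two of your stated reasons are wrong and must be replaced.

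\textbf{First issue.} That the strict transform of $\overline{H}_\infty$ is one of the components $D_X$ does not show that every point over $\overline{H}_\infty$ lies on such a component. Points of an exceptional divisor $E_X$ with $X\subsetneq\overline{H}_\infty$ dense that lie off this strict transform are not covered by it. The correct reason is that $\pi^{-1}(\overline{H}_\infty)$ is set-theoretically the union of the strict transform of $\overline{H}_\infty$ and the $E_X$ with $X\subsetneq\overline{H}_\infty$ dense. (A blow-up whose centre is not contained in the current total transform of $\overline{H}_\infty$ adds no component to it.) The vanishing of stalks at points of these $E_X$ then uses $q_X\neq1$ for the smaller dense edges, i.e.\ the full CDO-condition. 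Your wording suggests $q_\infty\neq1$ suffices, which is false. For two parallel lines in $\C^2$, the point at infinity $P$ is a dense edge with $q_P=q_1q_2q_\infty=1$ identically. There $U\simeq S^1\vee S^1$ has $H^1(U,\sL)\neq0$ for every $\sL$, including those with $q_\infty\neq1$.

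\textbf{Second issue.} The map $\pi|_V:V\to\C^l$ is not semismall in general: blowing up a centre of codimension $c\geq3$ creates fibres $\mathbb{P}^{c-1}$ with $2(c-1)>c$. So that justification fails. It is also unnecessary, since $R(\pi|_V)_*Ra_*\sL=R\iota_*\sL$, and $R\iota_*\sL[l]$ is perverse because $\iota$ is a quasi-finite affine morphism. This is the alternative you mention, and it is the one to use.

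\textbf{Comparison with the paper's tools.} The paper's machinery gives a genuinely different proof, but only for essential complexified real arrangements. In Yoshinaga's chamber complex over $\C$, the diagonal entries $\deg(C,C^\vee)\Delta'(C,C^\vee)$ of the upper-triangular block of Proposition~\ref{up_triangular} are $\pm(1-q_{X(C)})$ times a unit. By Propositions~\ref{sep_and_xc} and~\ref{chamber_dense_edge} these are nonzero under the CDO-condition, so $\rk d'^k_\sL\geq\#\bch^k(\sA)$. Since $\#\ch^k(\sA)=\#\bch^k(\sA)+\#\bch^{k-1}(\sA)$ by Proposition~\ref{bch_uch_bij}, this forces $H^k(U,\sL)=0$ for $k<l$.

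That route is purely combinatorial, and it is exactly what the proof of Theorem~\ref{main_theorem} extends to $\Z[\Z_N]$-coefficients. There $1-t^{\varepsilon_X}$ is no longer a unit, and Lemma~\ref{TN} replaces ``nonzero''. Your route is conceptual and covers arbitrary complex arrangements, which the chamber complex cannot reach. But it hinges on $Rj_*\sL$ vanishing along $\pi^{-1}(\overline{H}_\infty)$. That never happens for the pushforward of the constant sheaf from a cyclic cover, since it contains the trivial local system. So your argument does not adapt directly to the covers the paper studies.
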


For a rank one $\Z$-local system $\sL$, 
Sugawara \cite{Sug23} shows that 
if $\sA$ is an essential complexified real arrangement and $\sL$ satisfies CDO-condition, then 
$H^l(U,\sL)=\Z^{\beta_l}\oplus \Z_2^{\beta_{l-1}} $ and $H^k(U,\sL)=\Z_2^{\beta_{k-1}}$ for $1\leq k\leq l-1$, where $b_i=b_i(U)$ is the $i$-th Betti number of $U$ and $\beta_k=|\sum^k_{i=0}(-1)^i\cdot b_i|$. 
Furthermore, Liu, Maxim and Wang \cite{LMW24} generalized Sugawara's result to arbitrary essential complex arrangements.

Now we study the cohomology groups of finite cyclic covers of $U$.
Fix an epimorphism $\varphi:\pi_1(U) \to \Z_N$ defined by $ \gamma_i \mapsto \varepsilon_i$, where $N$ is a positive integer and  $\gamma_H$ denotes the meridian around the hyperplane $H\in \sA$.
Then $\varphi$ induces an $N$-fold cover $U^\varphi$ of $U$. 
Let $p:U^{\varphi} \to U$ be the covering map. 
Given a commutative ring $R'$ with identity, we have the following isomorphism by using Leray spectral sequence \cite{Dim04}: 
$$
H^k(U^\varphi,R')
\cong H^k(U^\varphi,\underline{R'}) 
\cong H^k(U,p_*\underline{R'}),
$$
where $\underline{R'}$ is the constant sheaf on $U^\varphi$. 
If we take $R=R'[\Z_N]\cong R'[t]/(t^N-1)$, then $p_*\underline{R'}$ is a rank one $R$-local system on $U$.  
In fact, the rank one $R$-local system $\sL=p_*\underline{R'}$  corresponds to the representation $\pi_1(U) \to R^\times,\gamma_i \mapsto t^{\varepsilon_i}$. 
Denoting $\varepsilon_X=\sum_{H_i\supset X}\varepsilon_i$, we obtain $q_X=t^{\varepsilon_X}$.

In this paper, we focus on the following combinatorial condition for this covering space $U^\varphi$, which is analogous to the CDO-condition. 
In the following discussion, we follow the convention that $\gcd(0,N)=N$.
\begin{defn}[CDO-condition for cyclic $N$-covers]
\label{CDO_condition_N}
We say that $\sL$ (or $\varphi$) satisfies the CDO-condition for cyclic $N$-covers if $\gcd(\varepsilon_X,N)= 1$ for each dense edge $X\subset \overline{H}_\infty$. 

\end{defn}

This condition has the following explanation for the specific case $R'=\C$ and $R=\C[t^{\pm}]/(t^N-1)$. 
In fact, the direct image sheaf  $\sL=p_*\underline{\C}$ decomposes into the direct sum of $N$ rank one $\C$-local systems on $U$. 
If $\sL$ satisfies the CDO-condition for cyclic $N$-covers, then every rank one non-constant $\C$-local system in this decomposition strictly satisfies the CDO-condition.

Consider a double covering space $U^\varphi$ of the complement of a complexified real arrangement, which is induced by a surjective map $\varphi:\pi_1(U)\to \Z_2\cong \{\pm 1\}$. 
Let $\sL=p_*\underline{\Z}$ be the rank one $R=\Z[\Z_2]$-local system.
Liu and Liu \cite[Corollary 1.7]{LL23} show that the integral cohomology group of $U^\varphi$ is torsion-free if the associated $\Z$-local system $\sL'$ satisfies the CDO-condition, where $\sL'$ corresponds to the representation $\varphi:\pi_1(U)\to \{\pm 1\} = \Z^\times$. 
In fact, $\sL'$ satisfies the CDO-condition if and only if $\sL$ satisfies the CDO-condition for cyclic $2$-covers. 
For the Milnor fiber of an essential complexified real line arrangement in $\CP^2$, Williams \cite{Wil13} gave a partial answer to Question \ref{Milnorfibertor}. 
He established an inequality for the first Betti number of the Milnor fiber. 
The author and Liu \cite{LX26} extended Williams's result to any essential complex line arrangements in $\CP^2$.

\begin{theorem}[{\cite[Theorem 1.3]{LX26}}] 
\label{line_arrangement_case}
Let $\sA$ be an essential complex arrangement of $n+1$ lines in $\CP^2$ with complement $U$. 
Fix any line $H\in \sA$. Let $\{P_1,\dots,P_s\}$ denote the set of multiple points on $H$ and let $m_i$ denote  the  multiplicity of $P_i$ for $1\leq i\leq s$.  
Here since $\sA$ is essential, we have $s>1$. 
Fix an epimorphism $\varphi:\pi_1(U)\twoheadrightarrow \Z$ with $\varphi(\alpha_H)=\epsilon\in \Z$ and $\varepsilon_i=\sum_{H'\supset P_i}\varphi(\gamma_{H'})$, where $H'\in \sA$ and $\gamma_{H'}$ is the meridian of $H'$.

Consider the composed map $\pi_1(U)\overset{\varphi}{ \twoheadrightarrow} \Z \twoheadrightarrow \Z_N$ and let $U^{\varphi, N}$ denote the associated $N$-fold covering.
If  $\epsilon=1$ and $\varepsilon_i\neq 0$ for all $m_i>2$, then 
for any field $\K$ we have
$$
 b_1 (U^{\varphi, N},\K) \leq n+\sum^s_{i=1}(m_i-2)(\gcd(\varepsilon_i,N)-1) .
$$
Furthermore, if  $\gcd(\varepsilon_i,N)=1$ for all $m_i>2$, then $H_1(U^{\varphi, N},\Z)\cong \Z^n$ is torsion-free. 
\end{theorem}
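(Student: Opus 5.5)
This is a result quoted from \cite{LX26}, so I will outline how I would reprove it with the tools of the present paper. I would work projectively with $H$ as the line at infinity, so that $U$ is the complement of an affine arrangement of $n$ lines in $\C^2$. Let $R'$ be a field $\K$ (or $\Z$) and put $R=R'[t]/(t^N-1)$. The Leray isomorphism gives $H^k(U^{\varphi,N},R')\cong H^k(U,\sL)$, where $\sL$ is the rank one $R$-local system sending $\gamma_i\mapsto t^{\varphi(\gamma_i)}$. In this picture the points $P_1,\dots,P_s$ on $H$ are the points at infinity of the parallel classes. Their monodromies are $q_{P_i}=t^{\varepsilon_i}$, and the monodromy around $H$ itself is $t^{\epsilon}=t$.

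The key step is a twisted minimal complex for $U$ in which $H^1(U,\sL)$ has a presentation by generators and relations that can be read off from the points at infinity. One option is Yoshinaga's chamber complex in the real case. For general complex lines I would use the braid monodromy / Fox-calculus presentation of $\pi_1(U)$, taking a generic pencil through one of the $P_i$.

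The local structure at each $P_i$ is as follows. The $m_i-1$ affine lines through $P_i$ are parallel, and the local monodromy contributes a block of the boundary map. Over $\K$, this block has corank at most $(m_i-2)\cdot\dim_\K\ker\bigl(t^{\varepsilon_i}-1 \text{ on } \K[t]/(t^N-1)\bigr)$ in excess of the generic case. That kernel has dimension $\gcd(\varepsilon_i,N)$, and the generic contribution corresponds to $\gcd=1$. The condition $\epsilon=1$ makes $t^{\epsilon}-1$ act with a one-dimensional kernel, and this pins down the $H^0$ and boundary contributions. Adding the generic part gives $b_1(U)=n$, so $b_1\le n+\sum_i(m_i-2)(\gcd(\varepsilon_i,N)-1)$. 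The hypothesis $\varepsilon_i\neq0$ for $m_i>2$ is what makes the multiplication by $t^{\varepsilon_i}-1$ act as a nontrivial operator, as the estimate requires.

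For torsion-freeness, assume $\gcd(\varepsilon_i,N)=1$ for all $i$ with $m_i>2$. Then the bound gives $b_1(U^{\varphi,N},\K)\le n$ for every field $\K$. The transfer, or the pullback along the covering, gives $b_1\ge b_1(U)=n$ in characteristic $0$, so $b_1(U^{\varphi,N},\mathbb{Q})=n$. Since $b_1$ over $\mathbb{F}_p$ is also at most $n$, the universal coefficient theorem rules out $p$-torsion in $H_1(U^{\varphi,N},\Z)$. Hence $H_1(U^{\varphi,N},\Z)\cong\Z^n$.

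The main obstacle is the local rank estimate in the complex, non-real case. There I would need to show that the conjugations introduced by braid monodromy do not lower the rank below the count coming from the points at infinity. My first approach is to localize at each $P_i$ using a pencil centered at $P_i$, so that the relevant block is genuinely the cyclic relation block with entries involving $t^{\varepsolon_i}-1$.
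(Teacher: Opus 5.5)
Your reduction to $H^1(U,\sL)$ with $R=\K[t]/(t^N-1)$ is fine, and so is your last step: transfer gives $b_1(U^{\varphi,N},\mathbb{Q})\ge n$, so a bound $\le n$ over every $\mathbb{F}_p$ excludes torsion in $H_1$ by universal coefficients. The gap is that the inequality itself, which is the whole content of the theorem, is only asserted. You claim the ``block at $P_i$'' has corank at most $(m_i-2)\gcd(\varepsilon_i,N)$ in excess of the generic case, but this is not derived from any explicit complex, and for non-real lines you leave it open.

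Your proposed remedy, a pencil centered at each $P_i$, does not work as stated. A braid monodromy/Fox presentation comes from a single pencil, and corank computations done in $s$ different presentations do not combine into a rank bound for one boundary matrix. What you need is a single matrix for $d^1$ containing a block-triangular minor with two kinds of diagonal entries: for each $i$, $m_i-2$ entries equal to units times $1-t^{\varepsilon_i}$, and $s-1$ further entries equal to units times $1-t^{\epsilon}$. You also need $\rk d^0\ge N-1$. Your outline gives no mechanism producing such a minor for complex lines. The present paper does not prove the general statement either; it only quotes it from \cite{LX26}.

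In the complexified real case the paper does recover the bound, in the Remark after the proof of Theorem~\ref{main_theorem}. There the mechanism is a global triangularity argument, not a local block at each $P_i$. Put $H$ at infinity and take a flag near $H$. Proposition~\ref{up_triangular} makes $(\deg(C_i,C_j^\vee))$ on $\bch^1(\sA)\times\uch^2(\sA)$ upper triangular with diagonal $\pm1$. Proposition~\ref{sep_and_xc} shows the corresponding diagonal entries of $d'_\sL$ are units times $1-t^{\varepsilon_{X(C)}}$, and Lemma~\ref{TN} gives each of them rank $N-\gcd(\varepsilon_{X(C)},N)$. Hence $b_1=Nn-\rk A_1-\rk A_0\le\sum_{C\in\ch^1(\sA)}\gcd(\varepsilon_{X(C)},N)$, using $\epsilon=1$ to get $\rk A_0\ge N-1$.

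The last ingredient is a chamber count. Exactly $m_i-2$ chambers of $\ch^1(\sA)$ have $X(C)=P_i$; these are the ends of the strips between consecutive parallel lines through $P_i$. The remaining $s$ chambers have $X(C)=H$, and each contributes $\gcd(\epsilon,N)=1$. None of these ingredients appears in your outline.

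Finally, in this argument the hypothesis $\varepsilon_i\neq0$ plays no role: with the convention $\gcd(0,N)=N$ the rank estimate still holds. So it is not what ``the estimate requires'', as your proposal says.
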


The above condition ($\epsilon=1$ and $\gcd(\varepsilon_i,N)=1$ for all $m_i>2$) is a special case of CDO-condition for cyclic $N$-covers. 
In this paper, we prove the following theorem for complexified real arrangement. 
This result can be considered as a partial generalization of Theorem \ref{line_arrangement_case}. 
For more details and notation, see Section 2 and 3.

\begin{theorem}[Main Theorem]
\label{main_theorem}
Let $\sA$ be an essential complexified real arrangement in $\C^l$ with the complement $U$ and $\overline{H}_\infty$ be the hyperplane at infinity. 
Let $\overline{\sA}=\sA \cup \{\overline{H}_\infty\}$ be the projective arrangement  associated to $\sA$. 
Fix an epimorphism $\varphi : H_1(U,\Z) \twoheadrightarrow \Z_N$ with $\varphi(\gamma_H)=\varepsilon_H$, where $N$ is a positive integer and $\gamma_H$ is the meridian associated with $H\in \sA$. 
Let $\varepsilon_X=\sum_{H\supset X}\varepsilon_H$, where $X\in L(\overline{\sA})$ and $L(\overline{\sA}) $ is the lattice of $\overline{\sA}$. 
Let $b_i=b_i(U,\Z)$ be the $i$-th Betti number of $U$. 
Fix a generic flag $\sF$ which is near to $\overline{H}_\infty$.

Let $U^{\varphi}$ be the associated $N$-fold covering of $U$ induced by $\varphi$. 
Then,
\begin{equation*}
b_i(U^{\varphi},\K)\leq \left\{
\begin{aligned}
&N\cdot(-1)^l\chi(U)+\sum_{C\in \uch^{l}(\sA)}\gcd(\varepsilon_{X(C)},N) &(i=l), \\
&\sum_{C\in \ch^i(\sA)}\gcd(\varepsilon_{X(C)},N) & (0\leq i\leq l-1).
\end{aligned}
\right.
\end{equation*}
where $\chi(U)$ is the Euler characteristic and with the convention that $\gcd(0,N)=N$. 
Furthermore, if the map $\varphi$ satisfies $\gcd(\varepsilon_X,N)=1$ for each dense edge $X\subset \overline{H}_\infty$ (i.e. satisfies the CDO-condition for cyclic $N$-covers),  
then 
\begin{equation*}
H^i(U^{\varphi},\Z)=\left\{
\begin{aligned}
&\Z^{b_l+(N-1)(-1)^l\chi(U)} &(i=l), \\
&\Z^{b_i} & (0\leq i\leq l-1), \\
&0 & (\text{otherwise}), 
\end{aligned}
\right.
\end{equation*}

\end{theorem}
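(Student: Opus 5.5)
We use Yoshinaga's chamber cochain complex. Take the generic flag $\sF$ near $\overline{H}_\infty$ and the local system $\sL=p_*\underline{R'}$ with $R=R'[t]/(t^N-1)$. Then $H^k(U^\varphi,R')\cong H^k(U,\sL)$ is computed by a cochain complex $(R[\ch^\bullet(\sA)],\nabla)$. In degree $k$ it is the free $R$-module on the chambers $\ch^k(\sA)$, i.e. chambers of $\sA\cap F^k$ not meeting $F^{k-1}$. Its differential $\nabla_{\sL}$ is given by Yoshinaga's degree map: a sum over separating hyperplanes of monomials in $t^{\pm \varepsilon_H/2}$-type factors. For complexified real arrangements, when $\sF$ is near infinity, the chambers $C\in\ch^k(\sA)$ with $k<l$ are unbounded. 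Each is attached to an edge $X(C)\subset\overline{H}_\infty$ (the face at infinity it approaches). The chambers in $\ch^l$ split into bounded ones $\bch^l$, of number $(-1)^l\chi(U)$, and unbounded ones $\uch^l$.

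The key structural step is to show that, after ordering the chambers suitably, the restriction of $\nabla$ to the "unbounded part" is triangular. The diagonal entries should be of the form $1-t^{\varepsilon_{X(C)}}$ up to units, in the spirit of Sugawara's computation for $\Z$-local systems and the Liu--Liu argument for double covers.

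For the Betti bound, work over $R'=\K$, so each chamber contributes a copy of $\K[t]/(t^N-1)$ of dimension $N$. Concretely, I would show that for $C\in\ch^{k}(\sA)$ (and $C\in\uch^l$) the component of $\nabla$ hitting $C$ from a paired chamber $C'$ in degree $k+1$ is $\pm(1-t^{\varepsilon_{X(C)}})$ times a unit. The unit should be a power of $t$, which a careful choice of flag near infinity ensures, because $C$ and $C'$ are separated exactly by the hyperplanes through $X(C)$ at infinity. The kernel of multiplication by $1-t^{e}$ on $\K[t]/(t^N-1)$ has dimension $\gcd(e,N)$ over any field, since $\gcd(t^e-1,t^N-1)=t^{\gcd(e,N)}-1$, with $\gcd(0,N)=N$. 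A triangularity/filtration argument then shows that in degree $i<l$ the cocycles modulo this pairing contribute at most $\sum_{C\in\ch^i}\gcd(\varepsilon_{X(C)},N)$. In degree $l$ the bounded chambers contribute the full $N(-1)^l\chi(U)$, and the unbounded ones contribute $\gcd(\varepsilon_{X(C)},N)$ each. This is a rank estimate: $\dim H^i\le \dim C^i-\mathrm{rk}\nabla^{i-1}-\mathrm{rk}\nabla^i$, with the ranks bounded below by the triangular blocks.

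For the integral statement under the CDO condition, first reduce to dense edges. For a non-dense edge $X\subset\overline{H}_\infty$, the local arrangement decomposes, so one should be able to replace $X(C)$ by a dense edge in the relevant boundary comparison. Alternatively, $\varepsilon_X$ for non-dense $X$ can be handled by a product formula. When $\gcd(\varepsilon_X,N)=1$, the bound gives $b_i(U^\varphi,\K)\le |\ch^i|=b_i$ for $i<l$, and $b_l\le N(-1)^l\chi(U)+|\uch^l|$, for every field $\K$. Using $|\ch^i(\sA)|=b_i(U)$ and $|\uch^l|=b_l-(-1)^l\chi(U)$, the right side is $b_l+(N-1)(-1)^l\chi(U)$. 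Conversely, the transfer/covering map gives $b_i(U^\varphi,\mathbb{Q})\ge b_i(U)$, since $H^*(U,\mathbb Q)$ injects. The Euler characteristic $\chi(U^\varphi)=N\chi(U)$ then forces equality in every degree over $\mathbb{Q}$ and over every $\mathbb{F}_p$. Equal Betti numbers over all prime fields imply, by universal coefficients, that $H^*(U^\varphi,\Z)$ is torsion-free of the stated ranks.

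The main obstacle is the triangularity step: identifying, for each unbounded chamber $C$, a partner chamber in the next degree whose matrix entry is exactly a unit times $1-t^{\varepsilon_{X(C)}}$, with these partners forming a matching compatible with an order. I would try first to realize the matching via the "opposite chamber across the flag" near $\overline{H}_\infty$, and to compute the degree map there explicitly. For that computation I would use Yoshinaga's formula and the observation that the two chambers differ only by hyperplanes containing $X(C)$.
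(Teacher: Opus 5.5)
Your skeleton matches the paper's proof: Yoshinaga's chamber complex over $R=\Z[t]/(t^N-1)$, a triangular block whose diagonal entries are units times $1-t^{\varepsilon_{X(C)}}$, the rank formula $\rk_\K(T_N^e-I_N)=N-\gcd(e,N)$, the count $\dim_\K H^k=N\#\ch^k(\sA)-\rk_\K\nabla^k-\rk_\K\nabla^{k-1}$, and then transfer, $\chi(U^\varphi)=N\chi(U)$ and universal coefficients. However, the step you call the main obstacle is left open, and parts of your sketch of it are wrong. The paper does not compute degrees; it imports three facts:
\begin{enumerate}
\item for essential $\sA$, $\iota(C)=C^\vee$ restricts to a bijection $\bch^k(\sA)\to\uch^{k+1}(\sA)$ (Prop.~\ref{bch_uch_bij});
\item after ordering $\bch^k(\sA)$ by decreasing $\dim X(C)$, the matrix $(\deg(C_i,C_j^\vee))$ is upper triangular (Prop.~\ref{up_triangular});
\item $\deg(C,C^\vee)=(-1)^{l-1-\dim X(C)}\neq 0$ (Prop.~\ref{deg_chamber}).
\end{enumerate}
So a chamber of $\uch^k(\sA)$ is paired with one in degree $k-1$, not $k+1$. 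What produces exactly $\sum_{C\in\ch^k(\sA)}\gcd(\varepsilon_{X(C)},N)$ is the split $\ch^k(\sA)=\bch^k(\sA)\sqcup\uch^k(\sA)$, into sources of pairs in $\nabla^k$ and targets of pairs in $\nabla^{k-1}$, together with $X(C)=X(C^\vee)$. Your separation claim is also reversed, as far as Yoshinaga's formula is concerned. In fact $\sep(C,C^\vee)=\{H\in\sA\mid H\not\supseteq X(C)\}$ (Prop.~\ref{sep_and_xc}). The identity $1-q^{-1}_{\sep(C,C^\vee)}=1-t^{\varepsilon_{X(C)}}$ holds only because $\varepsilon_{X(C)}$ contains $\varepsilon_\infty=-\sum_{H\in\sA}\varepsilon_H$. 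Using the hyperplanes of $\sA$ through $X(C)$ instead gives the wrong exponent. Finally, the accompanying unit is $q_{\sep(C,C')\cap\sep(C_0,C)}$, which is a power of $t$ for any flag.

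The CDO step has a genuine hole as proposed. No reduction to dense edges is possible. The diagonal entry is exactly $1-t^{\varepsilon_{X(C)}}$, so $X(C)$ cannot be traded for another edge. Moreover, suppose $X\subset\overline{H}_\infty$ were non-dense, with $\overline{\sA}_X=\mathcal{B}_1\sqcup\mathcal{B}_2$ and $\overline{H}_\infty\in\mathcal{B}_1$. Then $\varepsilon_X=\sum_{H\in\mathcal{B}_1}\varepsilon_H+\sum_{H\in\mathcal{B}_2}\varepsilon_H$, and the second sum belongs to an edge not contained in $\overline{H}_\infty$, on which the CDO hypothesis is silent. For example, take two parallel lines and a transverse line, all $\varepsilon_H=1$, and $N=2$: CDO holds, yet the double point at infinity on the transverse line has $\varepsilon=-2$. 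What you need, and what the paper uses, is Yoshinaga's result that $X(C)$ is a dense edge for every $C\in\uch(\sA)$ (Prop.~\ref{chamber_dense_edge}). Then the CDO condition applies to every diagonal entry, giving $\rk_\K\nabla^k\ge(N-1)\#\bch^k(\sA)$. From there the rest of your argument is the paper's: upper bounds over every field, the rational lower bound, the Euler characteristic, and universal coefficients.
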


Using the main theorem, we obtain the following corollaries.  
\begin{cor}
Let $\mathcal{A}$ be an essential complexified real arrangement in $\mathbb{CP}^l$ with $p$ hyperplanes, where $p$ is a prime number, then the cohomology groups of Milnor fiber $F$ of $\sA$ are
\begin{equation*}
H^k(F,\Z)=\left\{
\begin{aligned}
&\Z^{b_l+(p-1)(-1)^l\chi(U)} &(k=l), \\
&\Z^{b_k} & (0\leq k\leq l-1), \\
&0 & (\text{otherwise}). 
\end{aligned}
\right.
\end{equation*}
This follows from the main theorem, since the Milnor fiber corresponds to the case where $\varepsilon_H=1$ for every hyperplane $H \in \sA$.  
\end{cor}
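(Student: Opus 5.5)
We deduce the corollary from Theorem \ref{main_theorem} by writing the Milnor fiber as a cyclic $p$-fold cover of an affine complement and then checking the CDO-condition for cyclic $p$-covers.

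\emph{Step 1: set-up.} Let $\sA=\{H_0,\dots,H_{p-1}\}$ be the projective arrangement, with defining linear forms $\alpha_i$. Let $Q=\prod_i \alpha_i$, a homogeneous polynomial of degree $p$ on $\C^{l+1}$, and let $F=Q^{-1}(1)$. The standard Milnor fibration argument says that the restriction of the Hopf map $\C^{l+1}\setminus\{0\}\to\CP^l$ to $F$ is a regular $\Z_p$-covering. Its base is the projective complement $\CP^l\setminus\bigcup_i H_i$, and its deck group is generated by multiplication by $e^{2\pi i/p}$. The covering corresponds to the epimorphism $\pi_1\to\Z_p$ sending every meridian $\gamma_{H_i}$ to $1$.

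Now declare $H_0$ to be the hyperplane at infinity $\overline{H}_\infty$. Then the projective complement is identified with the complement $U$ of an affine arrangement in $\C^l$ of $p-1$ hyperplanes. This affine arrangement is essential and complexified real, since $\sA$ is. In $H_1(U,\Z)$ the meridian at infinity equals $-\sum_{i\ge1}\gamma_{H_i}$, so the induced map $\varphi:H_1(U,\Z)\to\Z_p$ has $\varepsilon_{H_i}=1$ for $i\ge1$ and $\varepsilon_\infty=-(p-1)\equiv 1 \pmod p$. The map $\varphi$ is onto because $p\ge 1$ and the $\varepsilon_{H_i}$ equal $1$. Hence $F=U^\varphi$ with $N=p$.

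\emph{Step 2: the CDO-condition.} For a dense edge $X\subset\overline{H}_\infty$, the local monodromy is $q_X=t^{\varepsilon_X}$, where the exponent collects the meridian values of all hyperplanes of $\overline{\sA}$ through $X$. Counting $\overline{H}_\infty$ itself with $\varepsilon_\infty\equiv 1$, this gives $\varepsilon_X\equiv |\{i: H_i\supset X\}| \pmod p$.

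This is the one point requiring care. The paper's formula $\varepsilon_X=\sum_{H\supset X}\varepsilon_H$ must be read on $L(\overline{\sA})$ with the convention $\varepsilon_{\overline{H}_\infty}=-\sum_{H\in\sA}\varepsilon_H$, consistent with $q_\infty$. Under this reading the case $X=\overline{H}_\infty$ gives $\varepsilon_X\equiv 1$ and causes no trouble.

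Since $X\neq\emptyset$ and $\sA$ is essential, $X$ cannot lie in all $p$ hyperplanes. Therefore $1\le |\{i:H_i\supset X\}|\le p-1$, and since $p$ is prime, $\gcd(\varepsilon_X,p)=1$. So $\varphi$ satisfies the CDO-condition for cyclic $p$-covers.

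\emph{Step 3: conclusion.} The second part of Theorem \ref{main_theorem} now gives $H^k(F,\Z)$ as stated, with $b_k=b_k(U)$ and $\chi(U)$ the invariants of the projective complement. Concretely, $H^l(F,\Z)\cong\Z^{b_l+(p-1)(-1)^l\chi(U)}$, $H^k(F,\Z)\cong\Z^{b_k}$ for $0\le k\le l-1$, and $H^k(F,\Z)=0$ otherwise. The only genuine obstacle is the bookkeeping of the meridian at infinity in Step 2; everything else is a direct substitution.
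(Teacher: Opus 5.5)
Your proposal is correct and takes the same route as the paper, which simply invokes the CDO part of Theorem~\ref{main_theorem} with $\varepsilon_H=1$ for every hyperplane. You also make explicit what the paper leaves implicit: after deconing at $H_0$, the value $\varepsilon_{\overline{H}_\infty}\equiv -(p-1)\equiv 1 \pmod p$ must be counted in $\varepsilon_X$ (the proof of the main theorem requires this, since $\sep(C,C^\vee)=\sA$ when $X(C)=\overline{H}_\infty$), so each dense edge $X\subset\overline{H}_\infty$ has $\varepsilon_X\equiv m \pmod p$ with $1\le m\le p-1$ by essentiality, and primality of $p$ then gives $\gcd(\varepsilon_X,p)=1$.
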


\begin{cor}
If there exists a hyperplane $H$ in the essential complexified real arrangement $\sA$ in $\CP^l$ such that $\gcd(\varepsilon_H,N)=1$ and  $H$ is transverse to all other hyperplanes, 
then the cohomology groups of the $N$-fold cyclic covering space $U^{\varphi}$ are
\begin{equation*}
H^k(U^{\varphi},\Z)=\left\{
\begin{aligned}
&\Z^{b_l+(N-1)(-1)^l\chi(U)} &(k=l), \\
&\Z^{b_k} & (0\leq k\leq l-1), \\
&0 & (\text{otherwise}), 
\end{aligned}
\right.
\end{equation*}
\end{cor}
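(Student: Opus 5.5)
The plan is to reduce the corollary to the second half of Theorem \ref{main_theorem}. The integral statement there needs only one thing: after a suitable choice of the hyperplane at infinity, $\varphi$ satisfies the CDO-condition for cyclic $N$-covers (Definition \ref{CDO_condition_N}). The given hyperplane $H$ will play the role of $\overline{H}_\infty$.

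\textbf{Step 1: the affine model.} Since $\sA$ is a projective complexified real arrangement, $H$ is defined over $\R$. Removing $H$ from $\CP^l$ therefore gives an affine chart $\C^l$ in which $\sA_0=\sA\setminus\{H\}$ is an affine complexified real arrangement with the same complement $U$. Essentiality of $\sA$ should pass to $\sA_0$, and I will check this directly. The $N$-fold cover $U^\varphi$ is unchanged. In this chart the meridian of $\overline{H}_\infty=H$ maps to $\varepsilon_H$, and this equals $-\sum_{H'\in\sA_0}\varepsilon_{H'}$ modulo $N$, because the product of all meridians is trivial in $H_1$ of the projective complement. For an edge $X\subset\overline{H}_\infty$, the local monodromy $q_X$ is, up to sign, the sum of the $\varepsilon$'s over all projective hyperplanes containing $X$. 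Since $\gcd$ is insensitive to sign, I will read the CDO-condition in this form, consistent with the convention in \cite{CDO03}.

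\textbf{Step 2: the dense edges in $\overline{H}_\infty$.} I interpret ``$H$ transverse to all other hyperplanes'' as follows: for every edge $Y$ of $\sA\setminus\{H\}$, the hyperplane $H$ meets $Y$ transversally. Then for any edge $X\subset H$ with $\operatorname{codim}X\ge 2$, write $X=H\cap Y$, where $Y$ is the intersection of the other hyperplanes containing $X$. The localization $\overline{\sA}_X$ splits as the product $\{H\}\times(\sA\setminus\{H\})_Y$. Hence it is decomposable, and $X$ is not dense. So the only dense edge contained in $\overline{H}_\infty$ is $\overline{H}_\infty$ itself. Its associated integer is $\pm\varepsilon_H$, and by hypothesis $\gcd(\varepsilon_H,N)=1$. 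The CDO-condition for cyclic $N$-covers therefore holds.

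\textbf{Step 3: conclusion.} Theorem \ref{main_theorem} now applies and gives exactly the stated groups $H^k(U^\varphi,\Z)$.

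The main obstacle is Step 1. I must make sure that the quantity $\varepsilon_X$ the main theorem uses for edges at infinity, in particular for $X=\overline{H}_\infty$, matches the monodromy $\varepsilon_H$. A literal empty sum over affine hyperplanes would give $0$ and hence $\gcd=N$, which is not what the theorem intends. I would resolve this by tracing the definition of $\varepsilon_{X(C)}$ through the chamber complex in Section 3, and confirming that it is the total projective monodromy around $X$.
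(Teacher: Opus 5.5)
Your proposal is correct and follows the argument the paper intends; the paper states this corollary without proof as an immediate consequence of Theorem \ref{main_theorem}. The idea is to take $H$ as $\overline{H}_\infty$, observe that transversality leaves $\overline{H}_\infty$ as the only dense edge at infinity, and apply the integral part of that theorem. The interpretive point you flag in Step 1 is settled in one line by Proposition \ref{sep_and_xc}, because the proof actually uses $q^{-1}_{\sep(C,C^\vee)}=\prod_{H'\in\sA_0,\,H'\not\supseteq X(C)}q_{H'}^{-1}=q_\infty\prod_{H'\in\sA_0,\,H'\supseteq X(C)}q_{H'}$, which for $X(C)=\overline{H}_\infty$ is exactly $q_\infty=t^{\varepsilon_H}$.
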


\begin{conjecture}

The torsion-freeness result established in Theorem \ref{main_theorem} can be extended to any essential complex arrangements, provided that the associated local system satisfies the CDO-condition for cyclic $N$-covers.
\end{conjecture}

The paper is organized as follows. 
In section 2, we recall the construction of the chamber cochain complex.
In section 3 we give the proof of the main theorem.

\subsection*{Acknowledgments} The author would like to thank Yongqiang Liu for giving an idea and comments on this paper.

\section{Preliminary}

\subsection{Chambers and flags}

Let $\sA=\{H_1,\dots,H_n\}$ be a hyperplane arrangement in $\R^l$ and $\overline{H}_\infty$ be the hyperplane at infinity.   
For an affine arrangement $\sA$, we define the associated projective arrangement $\overline{\sA}=\{H_1,\dots,H_n,\overline{H}_\infty\}$.
Let $\sA^\C$ be the complexified real arrangement associated to the real arrangement $\sA$ and $U$ be the complement of $\sA^\C$. 
The intersection lattice $L(\sA)$ of $\sA$ is the poset consisting of nonempty intersections of hyperplanes in $\sA$. 
For each $X\in L(\sA)$, define the subarrangement $\sA_X=\{H\supset X \mid H\subset \sA\}$. 
The intersection $X\in L(\sA)$ is called a dense edge if $\sA_X$ is indecomposable, see \cite{Dim17}. 
The intersection lattice and dense edge of $\overline{\sA}$ are defined similarly.

A connected component of the complement $\R^l\backslash \bigcup^n_{i=1}H_i$ is called a chamber. 
Denote the set of all chambers, bounded chambers and unbounded chambers by $\ch(\sA),\bch(\sA)$, $\uch(\sA)$, respectively. Clearly, we have $\ch(\sA)=\bch(\sA) \sqcup \uch(\sA)$. 
For more details on chambers, we refer the reader to the papers by Yoshinaga \cite{Yos07,Yos12,Yos26}. 

\begin{defn}
A generic flag $\sF$ in $\R^l$ is a sequence of affine subspaces in $\R^l$
$$\sF:\emptyset=\sF^{-1} \subset \sF^0 \subset \sF^1 \subset \cdots \subset \sF^l=\R^l,$$
where each $\sF^k$ is a $k$-dimensional affine subspace such that $\dim(X\cap \sF^k)=k+\dim X-l$ for each $X\in L(\sA)$ with $k+\dim X>l$, and $X\cap \sF^k=\emptyset$ otherwise.
\end{defn}

\begin{defn}
We say that the flag $\sF$ is near to $\overline{H}_\infty$ if $\sF^{k-1}$ does not separate $0$-dimensional edges of $\sF^{k}\cap \sA$ for all $0\leq k \leq l$, where $\sA\cap\sF^k=\{H\cap \sF^k\mid H\in\sA\}$. 
\end{defn}

In the following discussion, we assume that a generic flag $\sF$ is near to $\overline{H}_\infty$.

\begin{defn}
For $k=0,\dots,l$, define 
$$\begin{aligned}
\ch^k(\sA)&=\{C\in \ch(\sA)\mid C\cap \sF^k \neq \emptyset, C\cap \sF^{k-1}=\emptyset \},\\
\bch^k(\sA)&=\{C\in \ch^k(\sA)\mid C\cap \sF^k  \text{ is bounded} \},\\
\uch^k(\sA)&=\{C\in \ch^k(\sA)\mid C\cap \sF^k  \text{ is unbounded} \}.
\end{aligned}$$
\end{defn}

Clearly we have 
$$\begin{aligned}
\ch^k(\sA)&=\bch^k(\sA)\sqcup \uch^k(\sA), \\
\ch(\sA)&=\bigsqcup_{k=0}^l \ch^k(\sA).
\end{aligned}$$
Since $\sF$ is near to $\overline{H}_\infty$, every chamber $C\in \bch^k(\sA)$ is an unbounded chamber for $k<l$, that is, $\bch^k(\sA) \subset \uch(\sA)$ for $k<l$. 
For $k=l$, we have $\bch(\sA)=\bch^l(\sA)$.

\begin{defn}
\begin{enumerate}
\item 
For each $C\in \uch(\sA)$, let $\overline{C}$ be the closure of $C$ in the projective space $\mathbb{RP}^l$. 
Let $X(C)$ be the minimum subspace containing $\overline{C}\cap\overline{H}_\infty$. 

\item 
For each $C\in\uch(\sA)$, there exists a unique chamber which is the opposite chamber with respect to $\overline{C}\cap\overline{H}_\infty$. 
Let $C^\vee$ be the opposite chamber of $C$. 
Obviously we have $C^{\vee \vee}=C$. 

\end{enumerate}
\end{defn}

\begin{defn}
Define the involution $\iota$ by
\begin{align*}
    \iota:\uch(\sA) &\longrightarrow \uch(\sA)\\
    C &\longmapsto C^\vee.
\end{align*}
\end{defn}

Note that $\iota$ is a bijection. Restricting this map to $\bch^k(\sA)$, we have the following proposition. 
\begin{prop}[{\cite[Theorem 2.10]{Yos12}}]
\label{bch_uch_bij}
If $\sA$ is essential, 
then the involution $\iota$ induces a bijection 
$$\iota:\bch^{k}(\sA) \xrightarrow{\cong} \uch^{k+1}(\sA), $$
for $0\leq k \leq l-1$. Thus it follows that $\#\bch^k(\sA)=\#\uch^{k+1}(\sA)$.
\end{prop}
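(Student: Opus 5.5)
The plan is to split the proof into two parts. First, show that $\iota$ maps $\bch^k(\sA)$ into $\uch^{k+1}(\sA)$. Second, show that $\#\bch^k(\sA)=\#\uch^{k+1}(\sA)$ by a counting argument. Since $\iota$ is an involution, hence injective, the two parts together give the bijection.

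\emph{Counting.} I would use the standard facts from Yoshinaga's theory for a generic flag.
\begin{enumerate}
\item[(a)] $\#\ch^k(\sA)=b_k(U)$. The chambers meeting $\sF^k$ are in bijection with the chambers of $\sA\cap\sF^k$, whose number is $\sum_{i\le k} b_i(U)$ by the Lefschetz-type genericity of $\sF^k$. Taking differences in $k$ gives $b_k$.
\item[(b)] $\#\bch^k(\sA)$ equals the number of bounded chambers of the generic section $\sA\cap\sF^{k}$, minus those already counted in $\sF^{k-1}$. Since $\sF$ is near to $\overline{H}_\infty$, every bounded chamber of $\sA\cap\sF^k$ avoids $\sF^{k-1}$. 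Hence $\#\bch^k(\sA)=|\chi(U\cap\sF^k)|=\beta_k$, by Zaslavsky's count of bounded chambers.
\end{enumerate}
Then $\#\uch^{k+1}(\sA)=b_{k+1}-\beta_{k+1}$. Because $\beta_{k+1}=|\sum_{i\le k+1}(-1)^ib_i|$ alternates in sign, this equals $\beta_k=\#\bch^k(\sA)$. Essentiality is used here: it ensures that the $0$-dimensional edges exist and that the bounded-chamber count is $\beta_k$.

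\emph{The inclusion $\iota(\bch^k)\subset\uch^{k+1}$.} Let $C\in\bch^k(\sA)$ with $k<l$. Then $C$ is unbounded in $\R^l$, as noted in the paper, so $F:=\overline{C}\cap\overline{H}_\infty$ is a nonempty face. Work in $\mathbb{RP}^l$ and let $C^\vee$ be the chamber on the antipodal side of $F$. I would show the following.
\begin{enumerate}
\item[(i)] $C^\vee\cap\sF^k=\emptyset$. In $\sF^k$, the hyperplane $\sF^{k-1}$ is close to the hyperplane at infinity of $\sF^k$, and $C\cap\sF^k$ lies on the vertex side. Any point of $C^\vee\cap\sF^k$ would force a chamber of $\sA\cap\sF^k$ adjacent at infinity to the region opposite $C\cap\sF^k$. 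That contradicts $C\cap\sF^k$ being bounded together with $C\cap\sF^{k-1}=\emptyset$.
\item[(ii)] $C^\vee\cap\sF^{k+1}$ is nonempty and unbounded. Since $C\cap\sF^{k+1}$ is unbounded (as $C\cap\sF^k$ is bounded but $C$ reaches $F$), its closure meets $\overline{\sF^{k+1}}\cap\overline{H}_\infty$ inside $F$. By genericity, $\overline{\sF^{k+1}}$ then crosses $\overline{H}_\infty$ transversally through the relative interior of a face of $F$. The opposite side of that crossing lies in $C^\vee$, so $C^\vee\cap\sF^{k+1}$ is nonempty and unbounded.
\end{enumerate}

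The main obstacle is step (i). It requires making precise how ``$\sF^{k-1}$ does not separate $0$-dimensional edges of $\sA\cap\sF^k$'' controls chambers near infinity. My first attempt would be to pass to the induced arrangement $\overline{\sA}\cap\overline{H}_\infty$ with the induced flag $\overline{\sF^{j}}\cap\overline{H}_\infty$ and argue by induction on $l$. The chambers at infinity of $C$ and $C^\vee$ are antipodal, and nearness translates into $\overline{\sF^k}\cap\overline{H}_\infty$ meeting exactly those faces at infinity that are seen from unbounded directions of $\sF^k$-sections.

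If (i) proves too delicate, I would fall back on proving only (ii) and the reverse inclusion $\iota(\uch^{k+1})\subset\bch^k$, by the symmetric argument. These two inclusions together with the counting above suffice.
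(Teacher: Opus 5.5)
The paper gives no proof of this proposition; it quotes Yoshinaga's Theorem 2.10. Your overall frame is sound: $\iota$ is injective, and your count $\#\uch^{k+1}=b_{k+1}-\beta_{k+1}=\beta_k=\#\bch^k$ is correct (it is Proposition~\ref{betti_number}). The gap is step (i), which is the real content of the theorem.

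Your sketch of (i) cannot be completed as written. It argues inside $\sF^k$ and uses only the nearness of $\sF^{k-1}$ in $\sF^k$, and (i) does not follow from that information. Here is an example. In $\R^3$ take vertical planes $H_1,H_2,H_3$ bounding a triangular prism, and $L=\{z=0\}$; this arrangement is essential.
\begin{itemize}
\item For the chamber $C$ of the prism above $L$, $X(C)$ is the vertical point at infinity, and $C^\vee$ is the chamber of the prism below $L$.
\item Let $\sF^2$ be a tilted plane whose trace on $\{z=0\}$ passes through the prism. Take $\sF^1\subset\sF^2$ and $\sF^0\in\sF^1$ generic and far out.
\item Then $C\cap\sF^2$ and $C^\vee\cap\sF^2$ are both nonempty, bounded and disjoint from $\sF^1$. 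So $C,C^\vee\in\bch^2$ and $\iota(C)\notin\uch^3$.
\end{itemize}
The flag is generic and near to infinity at levels $\le 2$. The only thing that fails is that $\sF^2$ separates the vertices $L\cap H_i\cap H_j$.

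The same example shows your fallback does not escape the problem. There $\#\uch^3=3$, while $\iota$ swaps $C$ and $C^\vee$, which lie in $\ch^2$ rather than $\ch^3$. So at most one element of $\uch^3$ is sent into $\bch^2$. The reverse inclusion is also not ``symmetric'' to (ii). Crossing $\overline{H}_\infty$ at $D\in\uch^{k+1}$ only shows that $D^\vee\cap\sF^{k+1}$ is unbounded, and says nothing about $D^\vee\cap\sF^k$.

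What is missing is the nearness of $\sF^k$ inside $\sF^{k+1}$, used through recession cones. Let $V_j$ be the direction space of $\sF^j$.
\begin{itemize}
\item All vertices of $\sA\cap\sF^{k+1}$ lie on one side of $\sF^k$. Hence every chamber $E$ meeting $\sF^k$ has some $w\in\mathrm{rec}(\overline E)\cap V_{k+1}$ pointing strictly into the vertex-free side. Indeed, the part of $\overline E\cap\sF^{k+1}$ on that closed side is a pointed polyhedron whose vertices all lie on $\sF^k$, and it contains points off $\sF^k$.
\item By Proposition~\ref{sep_and_xc}, the hyperplanes not separating $C$ and $C^\vee$ are exactly those whose linear part contains the span of $\mathrm{rec}(\overline C)$; the projectivization of that span is $X(C)$. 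This gives $\mathrm{rec}(\overline{C^\vee})=-\mathrm{rec}(\overline C)$.
\item If $C^\vee$ met $\sF^k$, then $\mathrm{rec}(\overline C)\cap V_{k+1}$ would contain vectors strictly on both sides of $V_k$. A suitable positive combination of them lies in $V_k$. It is nonzero because $\overline C$ is pointed ($\sA$ is essential). This contradicts the boundedness of $C\cap\sF^k$ and proves (i).
\end{itemize}

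The same nearness is the correct reason, in (ii), that $C\cap\sF^{k+1}$ is unbounded: a bounded chamber of $\sA\cap\sF^{k+1}$ cannot meet $\sF^k$. The fact that ``$C$ reaches $F$'' does not imply this.

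Also, in (ii) you need $\overline{\sF^{k+1}}$ to meet the relative interior of $F$ itself. Genericity with respect to the flats spanned by the faces of $F$ provides this. Crossing through a proper face $G\subsetneq F$ instead lands in the chamber opposite to $C$ with respect to $G$. That chamber is separated from $C$ only by the hyperplanes not containing $X(G)$, a strictly smaller set than $\sep(C,C^\vee)$, so it is not $C^\vee$.

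With (i) and the corrected (ii), your injectivity-plus-counting frame does finish the proof.
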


\begin{remark}
The condition ($\sA$ is essential) is necessary. 
Consider the arrangement $\sA=\{H_1=\{x=0\},H_2=\{x-1=0\}\}$ in $\R^2$, then the middle chamber $C$ satisfies that $C=C^\vee$ and hence $\iota$ is not a bijection between $\bch^k(\sA)$ and $ \uch^{k+1}(\sA)$.
\end{remark}

\begin{prop}[{\cite[ Proposition 2.4]{Yos12}}]
\label{chamber_dense_edge}
Let $X\in L(\overline{\sA})$ be an intersection contained in $\overline{H}_\infty$.
Then $X$ is a dense edge if and only if there exists a chamber $C\in \uch(\sA)$ such that $X=X(C)$.
\end{prop}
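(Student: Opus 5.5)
The plan is to reduce both directions to a local statement about a central arrangement near a point of $\overline{H}_\infty$. Then I will use the classical fact, due to Zaslavsky and Crapo, that a central arrangement is indecomposable exactly when the affine arrangement obtained by deconing it has a bounded chamber. The number of such bounded chambers is the $\beta$-invariant, which is nonzero if and only if the arrangement is indecomposable.

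\emph{Localization.} Fix $X\in L(\overline{\sA})$ with $X\subset\overline{H}_\infty$, and choose a point $p\in X$ lying on no hyperplane of $\overline{\sA}$ other than those containing $X$. In a small neighbourhood of $p$ in $\mathbb{RP}^l$, the arrangement $\overline{\sA}$ looks like the central arrangement $\overline{\sA}_X$. Passing to the quotient by $X$ gives an essential central arrangement $\mathcal{B}$ in $\R^{r}$, where $r=\operatorname{codim}X$. One of its hyperplanes, $H_0$, is the image of $\overline{H}_\infty$. Chambers of $\sA$ whose closure contains $p$ correspond bijectively to chambers of $\mathcal{B}$ lying on one fixed side of $H_0$; flipping the side corresponds to the involution $C\mapsto C^\vee$. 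Deconing $\mathcal{B}$ with respect to $H_0$ produces an affine arrangement $d\mathcal{B}$ in $\R^{r-1}$, whose chambers are exactly the chambers of $\mathcal{B}$ on one side of $H_0$.

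\emph{Translation of the condition $X(C)=X$.} For a chamber $C$ whose closure contains $p$, with local chamber $C'$ of $\mathcal{B}$, I will show that $X(C)=X$ holds iff $\overline{C'}\cap H_0=\{0\}$ after quotienting by $X$. This in turn holds iff the corresponding chamber of $d\mathcal{B}$ is bounded. The first equivalence uses that $\overline{C}\cap\overline{H}_\infty$ is a closed cell cut out by the restricted arrangement at infinity, and that its linear span is an edge of $\overline{\sA}$. That span contains $X$, and it equals $X$ exactly when, near $p$, the closure of $C$ meets $\overline{H}_\infty$ only inside $X$. Conversely, for an arbitrary $C\in\uch(\sA)$, I apply the same analysis with $X:=X(C)$ and $p$ a generic point in the relative interior of $\overline{C}\cap\overline{H}_\infty$. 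Such a $p$ lies on no hyperplane not containing $X(C)$, because the span of that cell is $X(C)$ and the cell is not contained in any smaller edge.

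\emph{Conclusion.} If $X=X(C)$, then $d\mathcal{B}$ has a bounded chamber, so $\beta(\mathcal{B})\neq 0$. Hence $\mathcal{B}$, and therefore $\overline{\sA}_X$, is indecomposable, and $X$ is dense. Conversely, if $X$ is dense, then $\beta(\mathcal{B})\neq 0$, so $d\mathcal{B}$ has a bounded chamber. Lifting it through the local correspondence at a generic $p\in X$ gives an unbounded chamber $C$ with $X(C)=X$.

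The main obstacle is the second step: making rigorous the identification of the span of $\overline{C}\cap\overline{H}_\infty$ with the local condition, and checking that the local-to-global correspondence of chambers is really a bijection. The latter requires choosing $p$ generic enough that nearby chambers of $\sA$ are exactly the local chambers of $\overline{\sA}_X$. I would handle this by taking a small convex ball around $p$ meeting only hyperplanes through $X$.
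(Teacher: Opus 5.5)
Your argument is correct and is essentially the standard proof behind the cited result of Yoshinaga; the paper itself only cites it. The steps are: localize at a point $p\in X$ lying on no hyperplane that does not contain $X$; match $X(C)=X$ with boundedness of the corresponding chamber of the decone of $\overline{\sA}_X$ along $\overline{H}_\infty$; and use Crapo's theorem that $\beta(\overline{\sA}_X)\neq 0$ exactly when $\overline{\sA}_X$ is indecomposable. Your choice of $p$ in the relative interior of $\overline{C}\cap\overline{H}_\infty$ correctly handles the direction from a chamber $C$ to density.

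One slip needs correcting. For essential $\sA$, the chambers whose closure contains $p$ correspond to the local chambers on \emph{both} sides of $H_0$, not on one fixed side. For example, all four quadrants of $\{x=0\}\cup\{y=0\}$ touch $[0:0:1]$, where there are only two local chambers per side. Likewise, flipping sides realizes $C\mapsto C^\vee$ only when $X(C)=X$. Neither point affects your argument, since $K\mapsto -K$ preserves $\overline{K}\cap H_0=\{0\}$ and you only use existence.
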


Denote by $b_k=b_k(U)$ the $k$-th Betti number and let $\beta_k=\#\bch^k(\sA)$. 

\begin{prop}[{\cite[Proposition 2.3.2]{Yos07},\cite[Proposition 2.6]{Sug23}}]
\label{betti_number}
If $\sA$ is essential, then for $0\leq k \leq l$, we have 
\begin{enumerate}
\item 
$b_k=\#\ch^k(\sA).$
\item 
$\beta_k= (-1)^k\sum^k_{i=0}(-1)^i b_i $. 
In particular, $\beta_l=\bch^l(\sA)=\bch(\sA)=(-1)^l\chi(U)$. 
\end{enumerate}
\end{prop}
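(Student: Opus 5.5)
The plan is to prove (1) by a counting argument that combines Zaslavsky's theorem with genericity of the flag. I would then deduce (2) from (1) together with the involution of Proposition~\ref{bch_uch_bij} and induction on $k$.

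For (1), fix $k$ and consider the real arrangement $\sA\cap\sF^k$ in $\sF^k\cong\R^k$. Its chambers are exactly the sets $C\cap\sF^k$ with $C\in\ch(\sA)$ and $C\cap\sF^k\neq\emptyset$. Each such intersection is connected, being a convex set, so the chambers of $\sA\cap\sF^k$ are in bijection with $\bigsqcup_{i\le k}\ch^i(\sA)$. By Zaslavsky's theorem combined with the Orlik--Solomon description of the Poincar\'e polynomial, the number of chambers of $\sA\cap\sF^k$ equals $\sum_{i=0}^k b_i(U\cap\sF^k_\C)$. Here $U\cap \sF^k_\C$ is the complement of the complexified section.

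Genericity of $\sF$ ($\dim(X\cap\sF^k)=k+\dim X-l$, and $X\cap\sF^k=\emptyset$ otherwise) implies that $L(\sA\cap\sF^k)$ is the truncation of $L(\sA)$ to elements of codimension $\le k$. Consequently the Orlik--Solomon algebra of the section agrees with that of $\sA$ in degrees $\le k$, so $b_i(U\cap\sF^k_\C)=b_i(U)$ for $i\le k$. Alternatively one can invoke the Lefschetz-type theorem for generic sections. Hence $\#\bigsqcup_{i\le k}\ch^i(\sA)=\sum_{i\le k}b_i$. Subtracting the same identity for $k-1$ gives $\#\ch^k(\sA)=b_k$. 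I expect the main point needing care to be this step: the comparison of the lattices of the generic section and of $\sA$ in low codimension, and the resulting equality of Betti numbers.

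For (2), write $\ch^k(\sA)=\bch^k(\sA)\sqcup\uch^k(\sA)$. Essentiality allows Proposition~\ref{bch_uch_bij}, which gives $\#\uch^k(\sA)=\#\bch^{k-1}(\sA)=\beta_{k-1}$ for $k\ge1$. For $k=0$, $\sF^0$ is a point, so the unique chamber containing it meets $\sF^0$ in a bounded set; hence $\uch^0(\sA)=\emptyset$ and $\beta_0=1=b_0$. Part (1) then yields the recursion $b_k=\beta_k+\beta_{k-1}$. Induction on $k$ gives
\[
\beta_k=b_k-\beta_{k-1}=(-1)^k\sum_{i=0}^k(-1)^ib_i .
\]
Finally, for $k=l$ the right-hand side is $(-1)^l\chi(U)$, since $b_i(U)=0$ for $i>l$. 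Moreover $\bch^l(\sA)=\bch(\sA)$, because $\sF$ is near to $\overline{H}_\infty$: as noted above, $\bch^k(\sA)\subset\uch(\sA)$ for $k<l$, so every bounded chamber of $\sA$ lies in $\ch^l(\sA)$ and is bounded there. This gives $\beta_l=\#\bch(\sA)=(-1)^l\chi(U)$.
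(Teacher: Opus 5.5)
Your proof is correct. The paper does not prove this proposition itself; it only cites \cite{Yos07} and \cite{Sug23}. Your part (1) is the standard Yoshinaga argument: the chambers of the section $\sA\cap\sF^k$ are exactly the sets $C\cap\sF^k$, Zaslavsky's theorem counts them by $\sum_{i\le k}b_i$ of the section, and genericity makes $L(\sA\cap\sF^k)$ the codimension-$\le k$ truncation of $L(\sA)$. Your injectivity and surjectivity check of the map $X\mapsto X\cap\sF^k$ is the point that needs care, and it goes through.

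For part (2) you get the count from the recursion $b_k=\beta_k+\beta_{k-1}$, using the involution of Proposition~\ref{bch_uch_bij} and $\uch^0(\sA)=\emptyset$. This is valid as stated and is the relation the paper actually exploits later. Note, however, that the bijection is a structurally stronger statement than the count you need.

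If you want (2) to follow from the lattice alone, apply Zaslavsky's bounded-region formula directly to the section. Nearness to $\overline{H}_\infty$ puts every vertex of $\sA\cap\sF^k$ strictly on one side of $\sF^{k-1}$. A bounded chamber of the section is the convex hull of its vertices, so it misses $\sF^{k-1}$. Hence the bounded chambers of $\sA\cap\sF^k$ are exactly the sets $C\cap\sF^k$ with $C\in\bch^k(\sA)$. Their number is $(-1)^k$ times the value at $1$ of the characteristic polynomial of the (essential, rank $k$) section. By the same truncation this equals $(-1)^k\sum_{i\le k}(-1)^ib_i$.

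That route avoids Proposition~\ref{bch_uch_bij} entirely and makes the nonnegativity of the alternating sums transparent. The same convex-hull observation, applied in $\sF^{k+1}$, also proves the fact $\bch^k(\sA)\subset\uch(\sA)$ for $k<l$, which you borrowed from the paper for the final identification $\bch^l(\sA)=\bch(\sA)$.
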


\begin{defn}
For $C,C'\in \ch(\sA)$, let $\sep(C,C')=\{H\in \sA \mid H \text{ separates } C \text{ and } C' \}$ be the set of separating hyperplanes of $C,C'$.
\end{defn}

\begin{prop}[{\cite[Proposition 2.3]{BY16}}]
\label{sep_and_xc}
Let $C\in \uch(\sA)$. Then 
$$\sep(C,C^\vee)=\{H\in \sA \mid H\nsupseteq X(C) \}=\overline{\sA}\setminus \overline{\sA}_{X(C)}.$$
In particular, if $\dim X(C)=l-1$, i.e., $X(C)=\overline{H}_\infty$, then $\sep(C,C^\vee)=\sA$. 
\end{prop}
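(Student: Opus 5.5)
The plan is to work in a small neighbourhood of a point of $\overline{H}_\infty$ in $\mathbb{RP}^l$. There the opposite chamber is determined by the local central arrangement $\overline{\sA}_{X(C)}$. The whole statement then reduces to tracking signs of homogeneous linear forms.

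\emph{Setup.} Let $F=\overline{C}\cap\overline{H}_\infty$, so that $X(C)$ is the projective span of $F$. Pick a point $x$ in the relative interior of $F$. Then the hyperplanes of $\overline{\sA}$ passing through $x$ are exactly those of $\overline{\sA}_{X(C)}$; this includes $\overline{H}_\infty$ itself. Choose homogeneous coordinates $(x_0:y)$ with $\overline{H}_\infty=\{x_0=0\}$. Each $H\in\sA$, say $H=\{\alpha(y)=c\}$, corresponds to the homogeneous form $f_H=\alpha(y)-cx_0$. The side of $H$ on which an affine point lies is the sign of $f_H/x_0$, which does not depend on the choice of homogeneous representative.

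\emph{Opposite chamber.} I would recall the meaning of $C^\vee$ from \cite{Yos12}: it is the chamber opposite to $C$ with respect to the face $F$. Concretely, take a small ball $B$ around $x$ in a chart of $\mathbb{RP}^l$. The chambers of $\overline{\sA}$ meeting $B$ correspond to chambers of the central arrangement $\overline{\sA}_{X(C)}$. $C^\vee$ is the affine chamber occupying the region of $B$ obtained from $C\cap B$ by the point reflection through $x$, in a linear chart centred at $x$. Hence $C^\vee\cap B$ and $C\cap B$ are separated by every hyperplane of $\overline{\sA}_{X(C)}$, and by no other hyperplane, since the others do not meet $B$.

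\emph{Sign computation.} Pick a lift of $B$ to $\R^{l+1}\setminus\{0\}$ on which all forms nonvanishing at $x$ keep a constant sign. Take $p\in C\cap B$ and let $p'\in C^\vee\cap B$ be its reflection. Because $\overline{H}_\infty\in\overline{\sA}_{X(C)}$, the coordinate $x_0$ has opposite signs at $p$ and $p'$. There are two cases for $H\in\sA$.
\begin{enumerate}
\item If $H\not\supseteq X(C)$, then $f_H(x)\neq 0$, so $f_H$ has the same sign at $p$ and $p'$. Dividing by $x_0$ gives opposite signs of $f_H/x_0$, hence $H\in\sep(C,C^\vee)$.
\item If $H\supseteq X(C)$, then $f_H$ changes sign between $p$ and $p'$, as does $x_0$. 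So $f_H/x_0$ has the same sign at both points, and $H\notin\sep(C,C^\vee)$.
\end{enumerate}
This proves $\sep(C,C^\vee)=\{H\in\sA\mid H\nsupseteq X(C)\}=\overline{\sA}\setminus\overline{\sA}_{X(C)}$. The second equality is immediate, since $\overline{H}_\infty\supseteq X(C)$. When $X(C)=\overline{H}_\infty$, no $H\in\sA$ contains $X(C)$, so $\sep(C,C^\vee)=\sA$.

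\emph{Main obstacle.} The delicate step is the second one: justifying that the reflected local region really lies in a single affine chamber, and that this chamber is the $C^\vee$ of the definition. I would handle it using $\overline{\sA}_{X(C)}$. Since $x$ lies in the relative interior of $F$, near $x$ the arrangement $\overline{\sA}$ coincides with the central arrangement $\overline{\sA}_{X(C)}$. The germ of $C$ at $x$ is then a single chamber $K$ of this central arrangement. The point reflection $-K$ is again a single chamber of $\overline{\sA}_{X(C)}$, and its affine part is contained in one chamber of $\sA$, namely $C^\vee$. Uniqueness of $C^\vee$ is taken from \cite{Yos12}.
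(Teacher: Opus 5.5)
Your proof is correct. It necessarily differs from the paper's treatment, because the paper gives no argument here: it imports the statement from \cite[Proposition 2.3]{BY16}, and it describes $C^\vee$ only informally as ``the opposite chamber with respect to $\overline{C}\cap\overline{H}_\infty$''.

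Your reflection through a relative-interior point $x$ of $F$ makes that definition precise. Reading the side of $H\in\sA$ off the sign of the degree-zero ratio $f_H/x_0$ then reduces the statement to a short sign count. Because $\overline{H}_\infty\in\overline{\sA}_{X(C)}$, the denominator flips between $p$ and $p'$. The numerator flips exactly when $H$ passes through $x$, i.e.\ when $H\supseteq X(C)$. An equivalent formulation: the projective line through $p$ and $p'$ meets each hyperplane of $\overline{\sA}$ once; the arc through $x$ meets exactly $\overline{\sA}_{X(C)}$, so the affine segment $[p,p']$ meets exactly the remaining hyperplanes.

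The citation buys the paper brevity. Your argument buys a self-contained proof that also settles well-definedness. It fixes the side of every $H\in\sA$ on which the reflected germ lies, and a chamber is determined by these sides. So the chamber you call $C^\vee$ does not depend on $x$, on the chart or on $p$, and you need not import uniqueness from \cite{Yos12}.

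Two points you assert deserve a line each.

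\begin{enumerate}
\item The fact that the hyperplanes of $\overline{\sA}$ through $x$ are exactly those of $\overline{\sA}_{X(C)}$ follows by homogenizing. Take the closed cone $P=\{x_0\geq 0,\ \sigma_H f_H\geq 0\ (H\in\sA)\}$, with signs $\sigma_H$ chosen so that $C$ is the image of its interior. Then $\overline{C}$ is the image of $P\setminus\{0\}$, and $F$ is the image of the face $P\cap\{x_0=0\}$. At a relative-interior point of a face, the active constraints are exactly those vanishing on the whole face, hence on its span $X(C)$.
\item The claim that the germ of $C$ at $x$ is a single local chamber uses essentiality, which makes $P$ pointed. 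For the middle chamber in the Remark after Proposition~\ref{bch_uch_bij}, the germ consists of two opposite local chambers. Your sign computation still applies there and correctly returns $\sep(C,C^\vee)=\emptyset$, i.e.\ $C^\vee=C$.
\end{enumerate}
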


\begin{ex}
The figure \ref{chamber fig} shows chambers of a real hyperplane arrangement with $5$ lines and a hyperplane at infinity.

\begin{figure}[htpb]
\centering
\begin{tikzpicture}
    
\draw [thick, densely dashed](-0.5,1)--(13,1);
\fill[black] (0.2,1) circle (0.06)  node[above] {$\sF_0$};
\draw (13,1) node[right]{$\sF_1$};

\draw[thick] (0,0.44)--(12,7.16);
\draw (0.2,0.5) node[below] {$H_1$};
\draw[thick] (4,0.5)--(4,4) to [out=90,in=225] (7.5,7.7);
\draw (4,0.5) node[below] {$H_2$};
\draw[thick] (6,0.5)--(6,7.7);
\draw (6,0.5) node[below] {$H_3$};
\draw[thick] (8,0.5)--(8,4) to [out=90,in=315] (4.5,7.7);
\draw (8,0.5) node[below] {$H_4$};
\draw[thick] (0,7.16)--(12,0.44);
\draw (12,0.5) node[below] {$H_5$};
\draw[thick] (0,6.6)--(9,6.6) to [out=0,in=90](12.5,4) -- (12.5,0.5);
\draw (12.8,0.5) node[below] {$\overline{H}_\infty$};

\draw (2,3.8) node{$C_0$};
\draw (3,1.5) node{$C_1$};
\draw (5,1.5) node{$C_2$};
\draw (7,1.5) node{$C_3$};
\draw (9,1.5) node{$C_4$};
\draw (10,3.8) node{$C_0^{\vee}$};
\draw (8.2,5.8) node{$C_1^{\vee}$};
\draw (5.3,5) node{$C_2^{\vee}$};
\draw (6.7,5) node{$C_3^{\vee}$};
\draw (3.8,5.8) node{$C_4^{\vee}$};
\draw (5.3,7.5) node{$C_3$};
\draw (6.7,7.5) node{$C_2$};
\draw (5,3.8) node{$D_1$};
\draw (7,3.8) node{$D_2$};

\fill[black] (6,6.6) circle(0.08) ;
\draw [thick,->] (4.5,7)--(5.5,6.7);
\draw (4.3,7.2) node[left]{$X(C_2)=X(C_3)$};

\draw [thick] (6.3,6.7)--(9.1,6.7) to [out=0,in=155](10.6,6.45);
\draw [thick,->] (8.5,7.2)--(8,6.9);
\draw (9.5,7.2) node{$\overline{C}_1^{\vee}\cap \overline{H}_\infty$};

\end{tikzpicture}
\caption{A flag and chambers}
\label{chamber fig}
\end{figure}

\end{ex}

\subsection{Deligne groupoid}

In this subsection, we briefly introduce the Deligne groupoid $\gal(\sA)$ and related theorems.
For details, see \cite{Yos07,Yos26}.

A path of length $k$ is a sequence $(C_0,C_1,\dots,C_k)$ of chambers such that the chambers $C_{i-1}$ and $C_i$ are adjacent. 
If $(C_0,C_1,\dots,C_k),(D_0,D_1,\dots,D_l)$ are two paths with $C_k=D_0$, then their composition is defined as
$$(C_0,C_1,\dots,C_k=D_0,D_1,\dots,D_l).$$

Now consider signed paths
$$(C_0*_{\sigma_1}C_1*_{\sigma_2}\dots*_{\sigma_k}C_k),$$
where $\sigma_i\in \{+,- \}$. 
If all signs are $+$, we identify $(C_0*_+C_1*_+\dots*_+C_k)$ with the path $(C_0,C_1,\dots,C_k)$;
if all signs are $-$, we identify $(C_0*_-C_1*_-\dots*_-C_k)$ with the path $(C_k,C_{k-1},\dots,C_1)^{-1}$.

\begin{defn}
Define $\gal(\sA)$ to be a category whose set of objects is the set of all chambers of $\sA$, i.e., $\mathrm{obj}(\gal(\sA))=\ch(\sA)$.  
For objects $C,C'$ in this category, the set of morphisms is defined as
$$\Hom(C,C')=\{ (C_0*_{\sigma_1}C_1*_{\sigma_2}\dots*_{\sigma_k}C_k) \mid k\geq 0,C_i\in\ch(\sA),C_0=C,C_k=C' \}/\sim,$$
where the equivalence relation $\sim$ is described in \cite{Yos26}.
\end{defn}

\begin{theorem}[\cite{Yos26}]
For any chamber $C\in \ch(\sA)$, we have the isomorphism 
$$\Hom(C,C)\cong \pi_1(U,p_C),$$
where $p_C$ is an arbitrary point in $C$.
\end{theorem}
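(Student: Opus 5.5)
The plan is to realize $\gal(\sA)$ as the edge-path groupoid of a cell complex that is homotopy equivalent to $U$. The natural candidate is the Salvetti complex $\mathrm{Sal}(\sA)$ of the complexified real arrangement. Its cells are indexed by pairs $(F,C)$, where $F$ is a face of the real stratification and $C$ is a chamber whose closure contains $F$. In particular:
\begin{itemize}
\item its $0$-cells are the chambers;
\item its $1$-cells are the ordered pairs of adjacent chambers $(C,C')$, one for each direction of crossing the common wall;
\item its $2$-cells correspond to codimension-two faces $F$ together with a chamber $C$ adjacent to $F$.
\end{itemize}
I would take as known Salvetti's theorem that $\mathrm{Sal}(\sA)$ embeds in $U$ as a deformation retract. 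The embedding can be chosen so that the vertex $C$ is a point $p_C\in C\subset\R^l\subset U$. Changing $p_C$ within the contractible set $C$ changes the fundamental group only by a canonical isomorphism, so the choice of base point is harmless.

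With this in hand, $\pi_1(U,p_C)\cong\pi_1(\mathrm{Sal}(\sA),C)$. The latter is computed from the $2$-skeleton as follows: the free groupoid on the oriented $1$-cells, modulo the boundary words of the $2$-cells. I would identify each $1$-cell $(C_{i-1},C_i)$, traversed in its own direction, with the positive step $C_{i-1}*_+C_i$. Traversing a $1$-cell backwards gives the negative symbols, matching the paper's convention that $(C_0*_-\cdots*_-C_k)$ is the inverse of the reversed positive path. Under this identification, signed paths are exactly edge paths in the $1$-skeleton.

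It remains to check the relations. First, the cancellation $e\,e^{-1}=1$ is part of the definition of the free groupoid. Second, the boundary of the $2$-cell attached to $(F,C)$, with $\operatorname{codim}F=2$, reads as follows. Let $C^F$ be the chamber opposite to $C$ with respect to $F$. The boundary is then the statement that the two positive minimal galleries from $C$ to $C^F$ going around $F$ are equal.

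The main step is to compare these relations with the equivalence relation $\sim$ of \cite{Yos26}. That relation is Deligne's: any two positive minimal paths with the same endpoints are identified, together with formal inverses. One direction of the comparison is immediate, because each $2$-cell relation identifies two positive minimal paths. The converse is where I expect the real work. I would prove it by invoking the Tits/Deligne lemma for real arrangements: any two minimal galleries between the same pair of chambers are connected by a sequence of elementary moves, and each move replaces one minimal gallery around a codimension-two face by the other. I would prove this lemma by induction on $\#\sep(C,C')$. The idea is to look at the first walls crossed by the two galleries, pass to the rank-two localization at their intersection, and use the minimality of galleries, which is equivalent to each separating hyperplane being crossed exactly once. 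Once the lemma holds, the two presentations coincide, so the groupoids agree and $\Hom(C,C)\cong\pi_1(U,p_C)$.
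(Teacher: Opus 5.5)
The paper gives no proof of this statement; it quotes it from \cite{Yos26}. Your overall reduction is the standard way this identification is obtained, and it is sound up to one step. You realize $U$ by the Salvetti complex with a vertex $p_C$ in each chamber, present the fundamental groupoid by positive edges modulo the boundaries of the $2$-cells $(F,C)$, and compare these codimension-two relations with Deligne's relation identifying positive minimal paths. The easy inclusion is fine, since each $2$-cell relation equates two positive minimal paths. The step that does not go through as sketched is your proof of the converse, the minimal-gallery lemma.

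In your induction step you take the first walls $H\neq H'$ of two minimal galleries from $C$ to $D$ and localize at $X=H\cap H'$. This produces an elementary move based at $C$ only if $H$ and $H'$ meet along a codimension-two face $F$ of $\overline{C}$. That holds for simplicial arrangements but fails in general, and the statement concerns arbitrary complexified real arrangements.

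For a counterexample, take in $\R^2$ the lines $x=0$, $x=1$, $y=\pm(1-x/10)$. Let $C=\{0<x<1,\ |y|<1-x/10\}$ be the quadrilateral chamber, and $D$ the chamber containing $(20,0)$. Crossing $y=-(1-x/10)$, then $x=1$, then $y=1-x/10$ gives a minimal gallery, and so does the reverse order. Their first walls meet only at $(10,0)\notin\overline{C}$, so localizing there gives no move at $C$. One must pass through galleries starting with $x=1$.

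Two things are needed to repair the argument.
\begin{enumerate}
\item[(i)] The adjacent case. Let $E$ be the chamber opposite $C$ across $F$. Check that $\sA_X\subseteq\sep(C,D)$ and $\sep(C,D)=\sA_X\sqcup\sep(E,D)$. This holds because $C$ lies in the sector of $\sA_X$ bounded by $H$ and $H'$, and $D$ lies in the opposite sector. Then the two galleries around $F$, each followed by a common minimal gallery $E\to D$, differ by one elementary move. Your induction hypothesis handles galleries with the same first step.
\item[(ii)] A connectivity statement. The walls of $C$ lying in $\sep(C,D)$ are the facets of $\overline{C}$ visible from a point of $D$. These facets are connected under the relation ``sharing a codimension-two face of $\overline{C}$'', because the visible part of the boundary of a polyhedron from a generic exterior point is a shellable ball (Bruggesser--Mani, after truncating an unbounded chamber by a large box).
\end{enumerate}
Chaining (i) along such a sequence of walls from $H$ to $H'$, with the induction hypothesis between consecutive links, completes the step. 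Alternatively, you can simply quote the lemma; it is classical and goes back to Deligne.
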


Furthermore, the category of representations of the Deligne groupoid $\gal(\sA)$ is equivalent to the the category of representations of the fundamental group and the category of local systems on $U$.

\subsection{Chamber cochain complex}

Now we describe how to compute the cohomology of a rank one local system. 
Fix a generic flag $\sF$ on $\sA$, which is near to $\overline{H}_\infty$. 
We will describe the twisted cochain complex 
$$(R[\ch^\bullet(\sA)],d'_\sL) : \cdots \longrightarrow R[\ch^k(\sA)] \stackrel{d'^k_\sL}{\longrightarrow} R[\ch^{k+1}(\sA)]\longrightarrow \cdots$$
which computes local system cohomology for an arbitrary rank one $R$-local system $\sL$, that is, 
$$H^*(R[\ch^\bullet(\sA)],d'_\sL)\cong H^*(U,\sL).$$

To describe the boundary map $d'_\sL$, we need to define two maps on the chambers of $\sA$: 
$$\deg:\ch^k(\sA)\times \ch^{k+1}(\sA) \rightarrow \Z,$$ 
and 
$$\Delta': \ch(\sA)\times \ch(\sA) \to R .$$

\subsubsection{The deg map}

Now we define the deg map, for more details, see \cite{Yos07}. 
Let $D=D^k \subset \sF^k$ be a sufficiently large $k$-dimensional ball so that every $0$-dimensional edge $X$ of $\sA\cap \sF^k$ is in the interior of $D^k$. 
Let $C\in \ch^k(\sA), C'\in \ch^{k+1}(\sA)$. 
There exists a tangent vector field $V^{C'}$ on $\sF^k$ that satisfies the following conditions. 
\begin{enumerate}
\item $V^{C'}(x) \neq 0$ for $x\in \partial(\overline{C}\cap D)$. 
\item $V^{C'}(x)$ points toward the interior of $D$ for $x\in \partial D$.
\item If $x\in H \cap\sF^k$ for $H\in\sA$, then $V^{C'}(x)\notin T_x(H\cap\sF^k)$ and $V^{C'}(x)$ points toward the side containing $C'$, where $T_x(H\cap\sF^k)$ is the tangent space of $H\cap\sF^k$ at $x$. 
\end{enumerate}

Now we define the deg map on chamber of $\sA$. 
\begin{defn}
Let $C\in \ch^k(\sA), C'\in \ch^{k+1}(\sA)$ and fix $V^{C'}$. 
Define $\deg(C,C')$ as follows. 

\begin{enumerate}
\item When $k=0$, $\deg(C,C')=1$. 
\item When $k \geq 1$, 
$$\deg(C,C'):=\deg \left( 
\frac{V^{C'}}{|V^{C'}|} :  \partial(\overline{C}\cap D) \rightarrow S^{k-1} 
\right) \in \Z,$$
where $S^{k-1}$ is a $(k-1)$-dimensional sphere. 
\end{enumerate}
\end{defn}

For some special cases, there is a classical result of deg map. 
\begin{prop}[{\cite[Theorem 4.8]{BY16},\cite[Section 3.2]{Yos12}}]
\label{deg_chamber}
\begin{enumerate}
\item[] 
\item For $0\leq k \leq l-1$, let $C\in \bch^k(\sA)$ . 
Then $$\deg(C,C^\vee)=(-1)^{l-1-\dim X(C)}.$$
\item  Let $C\in \bch^1(\sA)$. Then $C\cap \sF^1$ is a closed interval, whose boundary consists of two points, which can be expressed as $(H\cap \sF^1)\cup (H'\cap \sF^1)$ for $H,H'\in \sA$. Then $\deg(C,C')$ can be computed as 
$$ \deg(C,C')=\left\{
\begin{aligned}
1  & \text{ if } H,H'\in\sep(C,C'),\\
-1 & \text{ if } H,H'\notin\sep(C,C'),\\
0  & \text{ otherwise }. 
\end{aligned}
\right. $$

\item Let $C\in \uch^1(\sA)$. Then $C\cap \sF^1$ is an unbounded interval, whose boundary (a point) can be expressed as $(H\cap \sF^1)$. Then $\deg(C,C')$ can be computed as 
$$\deg(C,C')=\left\{
\begin{aligned}
-1 & \text{ if } H\notin\sep(C,C'),\\
0  & \text{ if } H\in\sep(C,C').
\end{aligned}
\right. $$

\end{enumerate}
\end{prop}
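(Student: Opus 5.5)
The statement is a direct computation of local degrees, so I would reduce everything to the sign pattern that condition (3) on $V^{C'}$ imposes on the walls of $\overline{C}\cap D$. Only two inputs are needed. First, at a boundary point lying on $H\cap\sF^k$, the field $V^{C'}$ points \emph{out of} $C$ exactly when $H\in\sep(C,C')$, and \emph{into} $C$ otherwise. Second, on $\partial D$ it points inward by condition (2). The degree of $V^{C'}/|V^{C'}|$ on $\partial(\overline{C}\cap D)$ depends only on the homotopy class of a nowhere-vanishing field satisfying these boundary conditions. So I am free to choose a convenient representative and compute its degree by Poincaré--Hopf, as the sum of indices of its zeros inside $\overline{C}\cap D$.

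\textbf{Parts (2) and (3), the case $k=1$.} Here $\partial(\overline{C}\cap D)$ is a $0$-sphere $\{a<b\}$ with oriented boundary $b-a$. The degree of a map to $S^0=\{\pm1\}$ is $\tfrac12(\mathrm{sign}\,V(b)-\mathrm{sign}\,V(a))$.
\begin{itemize}
\item Bounded $C$, both endpoints in $\sep(C,C')$: both vectors point outward, so $V(a)<0<V(b)$ and the degree is $1$.
\item Bounded $C$, neither endpoint in $\sep(C,C')$: both vectors point inward, giving degree $-1$.
\item Bounded $C$, mixed case: both vectors point the same way, giving degree $0$.
\item Unbounded $C$: one endpoint lies on $\partial D$, where $V$ points inward. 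If $H\notin\sep(C,C')$, the field is also inward at $H$, giving $-1$. If $H\in\sep(C,C')$, it is outward at $H$, so both vectors are parallel and the degree is $0$.
\end{itemize}

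\textbf{Part (1).} Take $C\in\bch^k(\sA)$ and $C'=C^\vee$. By Proposition~\ref{sep_and_xc}, the walls of the polytope $P=\overline{C}\cap\sF^k$ lying on hyperplanes $H\not\supseteq X(C)$ receive outward vectors, and those with $H\supseteq X(C)$ receive inward vectors. Since $C\cap\sF^k$ is bounded, no wall of $\partial D$ is involved.

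The hyperplanes containing $X(C)$ are all parallel to a common direction space $W\subset\R^l$ of dimension $\dim X(C)+1$. Their normal directions therefore span a space of dimension $c=l-1-\dim X(C)$. By genericity of $\sF$, the restriction to $\sF^k$ still has a $c$-dimensional normal space $N\subset T\sF^k$ with complement $W'=W\cap T\sF^k$.

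I would pick $p$ in the interior of $P$ and set
$$V(x)=-\pi_N(x-p)+\pi_{W'}(x-p).$$
This field has a single nondegenerate zero at $p$, of index $(-1)^{c}=(-1)^{l-1-\dim X(C)}$. If it satisfies the boundary conditions, Poincaré--Hopf gives the claim.

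On walls containing $X(C)$ this is clear. Such a wall is invariant under $W'$-translations, so only the $N$-component of $V$ matters, and $-\pi_N(x-p)$ points toward $p$, hence inward by convexity of $P$.

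\textbf{Main obstacle.} The hard step is showing that the field is outward on the walls \emph{not} containing $X(C)$. This is where nearness of $\sF$ to $\overline{H}_\infty$ and oppositeness of $C^\vee$ must enter. My first attempt would be to rescale, replacing $\pi_{W'}$ by $\lambda\pi_{W'}$ with $\lambda\gg1$. The aim is then to show that every wall not containing $X(C)$ is crossed transversally when moving from $p$ in the $W'$-directions. The intuition is that $C$ and $C^\vee$ are separated near $X(C)$ precisely by such hyperplanes, so $C\cap\sF^k$ is "long" in $W'$ and capped by them. If this fails, I would homotope the boundary field instead, straight-line on each face, checking that no face has a zero, and compute the index locally near the vertex of $P$ closest to $X(C)$.
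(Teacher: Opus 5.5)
The paper gives no proof of this proposition; it is quoted from \cite[Theorem 4.8]{BY16} and \cite[Section 3.2]{Yos12}. Your parts (2) and (3) are a correct direct computation. Your reduction of part (1) to an index count is also sound: the fields with the prescribed facet signs form a convex set, so the degree depends only on the sign pattern.

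\textbf{The gap in part (1).} Part (1) is not proved. The step you call the main obstacle is the whole content of the statement, and it cannot come from genericity of $\sF$: the formula is false for generic flags that are not near $\overline{H}_\infty$. Take $\sA=\{x=0,\ x=1,\ y=x,\ y=-x\}$, $C=\{0<x<1,\ y>x\}$, $\sF^1=\{x+2y=1\}$ and $\sF^0=(3,-1)$. Then $C\in\bch^1(\sA)$, $X(C)$ is the vertical point at infinity, and $C^\vee=\{0<x<1,\ y<-x\}$ misses $\sF^1$, so it lies in $\ch^2(\sA)$. The interval $C\cap\sF^1$ has one endpoint on $x=0$ (inward) and one on $y=x$ (outward), so $\deg(C,C^\vee)=0\neq -1$.

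This example also exposes two hidden assumptions in your construction:
\begin{enumerate}
\item Here $c=k=1$ and $W'=0$, so your field $-(x-p)$ is inward at both ends. For $c=k$ you are silently assuming that every wall of $P$ lies on a hyperplane through $X(C)$.
\item Asking for $\dim N=c$ inside $T\sF^k$ requires $c\le k$. This again comes from nearness, not genericity: it fails for a generic line crossing the tube $\{0<x<1,\ 0<y<1,\ z>0\}$ of $\{x=0,x=1,y=0,y=1,z=0\}$.
\end{enumerate}
The case $k=0$ is also outside Poincar\'e--Hopf. There $\deg=1$ by definition, so you must check separately that $X(C_0)=\overline{H}_\infty$.

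\textbf{What is missing.} You need a structural lemma on $P=\overline{C}\cap\sF^k$ that genuinely uses nearness. Write $X(C)=\mathbb{P}(W)$, let $\pi:\R^l\to\R^l/W$, and let $Q$ be the chamber containing $\pi(C)$ of the essential quotient arrangement $\{\pi(H)\mid \overline{H}\supseteq X(C)\}$. $Q$ is bounded, because the recession cone $\mathrm{rec}(\overline{C})$ spans $W$ and its relative interior misses the direction space of every $H$ with $\overline{H}\not\supseteq X(C)$. The lemma must show:
\begin{enumerate}
\item $\pi(P)=Q$;
\item the walls of $P$ on hyperplanes through $X(C)$ are exactly $P\cap\pi^{-1}(\partial Q)$;
\item every fibre $P_y=P\cap\pi^{-1}(y)$, for $y\in Q$ including $\partial Q$, is a $(k-c)$-dimensional polytope whose facets lie on the remaining hyperplanes.
\end{enumerate}
For $k=l-1$ this holds. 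Each slice $\overline{C}\cap\pi^{-1}(y)$ is a pointed polyhedron with recession cone $\mathrm{rec}(\overline{C})$. Its vertices lie on bounded faces of $\overline{C}$, hence on the side of $\sF^{l-1}$ opposite to $\mathrm{rec}(\overline{C})$. For smaller $k$ the argument has to be run inside $\sF^{k+1}$.

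Granting the lemma, you can finish in either of two ways:
\begin{enumerate}
\item Use the relative count $\deg=\chi(P)-\chi(P_{\mathrm{in}})$, where $P_{\mathrm{in}}=P\cap\pi^{-1}(\partial Q)\simeq S^{c-1}$.
\item Use the field $-\pi_N(x-p)+\lambda\,(x-s(\pi(x)))$, where $s$ is a continuous section of $\pi|_P$ into the relative interiors of the fibres (e.g.\ fibre barycentres).
\end{enumerate}
Your field is the special case where $s$ is affine. A convex polytope fibred over a non-simplex $Q$ need not admit an affine section through all fibre interiors, so taking $\lambda\gg 1$ is not by itself enough.
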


\begin{prop}[{\cite[Theorem 4.6]{BY16}}]
\label{up_triangular}
For $0 \leq k \leq l-1$, let us fix an ordering of chambers $\bch^k(\sA) = \{C_1,\dots,C_b\}$ such that
$$\dim X(C_1)\geq \dim X(C_2) \geq \cdots \geq \dim X(C_b).$$
Then the matrix $(\deg(C_i,C_j^\vee))_{i,j}$ is upper triangular, that is, if $i>j$, then $\deg(C_i,C_j^\vee)=0$. 
\end{prop}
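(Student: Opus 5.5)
The plan is to prove the vanishing $\deg(C_i,C_j^\vee)=0$ for $i>j$ by exhibiting, in each such case, an admissible vector field $V^{C_j^\vee}$ on $\sF^k$ that has no zero on the whole compact cell $\overline{C_i}\cap D$. If the field extends without zeros over the cell, then $V/|V|$ restricted to $\partial(\overline{C_i}\cap D)\cong S^{k-1}$ extends to a map from the $k$-ball to $S^{k-1}$, so its degree is $0$.

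Here $C_i\in\bch^k(\sA)$, so $C_i\cap\sF^k$ is bounded and lies in the interior of $D$. Hence only condition (3) in the definition of $V^{C'}$ matters on $\partial(\overline{C_i}\cap D)$: on each wall $H\cap\sF^k$ of $C_i\cap\sF^k$, the field must point to the side of $H$ containing $C_j^\vee$. I will use Proposition \ref{sep_and_xc} to determine these sides. A hyperplane $H\supseteq X(C_j)$ does not separate $C_j$ from $C_j^\vee$, so on such a wall $V$ must point toward the side of $C_j$. A hyperplane $H\not\supseteq X(C_j)$ separates them, so $V$ must point toward the side opposite to $C_j$.

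To build the field I will take a constant vector $v\in T\sF^k$, perturbed if necessary by a small field vanishing away from the walls. The vector $v$ is to point "toward $X(C_j)$ at infinity along $\sF^k$": for the hyperplanes through $X(C_j)$ this respects the side of $C_j$, while for the others it crosses into the opposite half. The nearness of $\sF$ to $\overline{H}_\infty$ is the key geometric input. It forces all $0$-dimensional edges of $\sA\cap\sF^k$ to lie on one side of $\sF^{k-1}$, so the cell $C_i\cap\sF^k$ is a thin region hugging $X(C_i)$ at infinity, and its walls are essentially the hyperplanes of $\overline{\sA}_{X(C_i)}$ together with those cutting it off near $\sF^{k-1}$. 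I then split into two cases. If $\dim X(C_i)<\dim X(C_j)$, or more generally $X(C_i)\not\supseteq X(C_j)$, I will find a hyperplane through $X(C_j)$ not through $X(C_i)$, or vice versa, and choose $v$ along a direction that is transverse to the walls in a consistent way. If $X(C_i)=X(C_j)$ with $C_i\neq C_j$, some wall of $C_i$ through $X(C_i)$ separates $C_i$ from $C_j$, and this supplies the required consistent direction.

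I expect the main obstacle to be verifying that a single constant direction $v$ satisfies all the wall conditions simultaneously. This requires a precise description of which hyperplanes actually bound $C_i\cap\sF^k$ and of their orientation relative to $C_j$. I would first attempt the case $k=1$, where Proposition \ref{deg_chamber}(2) gives the degree explicitly: there one must show that exactly one of the two endpoint hyperplanes of $C_i\cap\sF^1$ lies in $\sep(C_i,C_j^\vee)$. I would then try to reduce the general $k$ to this case by induction along the flag, restricting to $\sF^{k-1}$-parallel slices. If the constant-field approach fails, the fallback is to compute the degree as a signed count of zeros of a generic $V$ inside the cell, and to show that these zeros can be pushed out of the cell.
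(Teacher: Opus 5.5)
The paper does not prove this statement itself; it quotes it from \cite[Theorem 4.6]{BY16}. Your opening reduction is fine: for $C_i\in\bch^k$ the cell $\overline{C_i}\cap D=\overline{C_i}\cap\sF^k$ is a compact convex $k$-ball, so $\deg(C_i,C_j^\vee)=0$ is equivalent to the existence of an admissible $V^{C_j^\vee}$ with no zero on it. The side rule you derive from Proposition \ref{sep_and_xc} is also correct. But this only restates the problem. The step meant to carry the proof is a single constant direction $v$, slightly perturbed, satisfying condition (3) on every wall. That step is neither proved nor true in general, for three reasons.
\begin{enumerate}
\item A direction pointing to a point of $X(C_j)$ at infinity is parallel to every $H\supseteq X(C_j)$. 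It is therefore tangent to those walls and cannot ``respect the side of $C_j$'' there.
\item The claim that, when $X(C_i)=X(C_j)$, some wall through $X(C_i)$ separates $C_i$ from $C_j$ fails when $X(C_i)=\overline{H}_\infty$, because no hyperplane of $\sA$ contains $\overline{H}_\infty$.
\item Decisively, your case analysis ignores the walls of $C_i\cap\sF^k$ not containing $X(C_i)$ (the end caps of the tube). These can block every constant field.
\end{enumerate}

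Here is a concrete example. Take $\sA=\{x=0,\ x=1,\ x=2,\ y=x,\ z=x\}$ in $\R^3$, which is essential. Let $\sF^2=\{y+z+\epsilon x=c\}$ with $\epsilon>0$ small and $c\gg 0$, completed to a generic flag near $\overline{H}_\infty$. Then $C_i=\{1<x<2,\ y>x,\ z>x\}$ and $C_j=\{0<x<1,\ y>x,\ z>x\}$ both lie in $\bch^2(\sA)$, with the same $X(C_i)=X(C_j)$ of dimension $1$. So the statement asserts $\deg(C_i,C_j^\vee)=0$, and Proposition \ref{sep_and_xc} gives $C_j^\vee=\{0<x<1,\ y<x,\ z<x\}$.

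In coordinates $(x,y)$ on $\sF^2$ we have $\overline{C_i}\cap\sF^2=\{1\le x\le 2,\ x\le y\le c-(1+\epsilon)x\}$. Condition (3) then requires:
\begin{itemize}
\item $V_x<0$ on the edges $x=1$ and $x=2$;
\item $V_y<V_x$ on the edge $y=x$;
\item $V_y>-(1+\epsilon)V_x$ on the edge $y=c-(1+\epsilon)x$.
\end{itemize}
Even the non-strict versions of these inequalities force a constant $V$ to vanish. So every nonzero constant field points strictly the wrong way along some whole edge, and no small perturbation can repair that.

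The degree is nevertheless $0$: the field $u\mapsto u-u_0$, repelling from a point $u_0$ of the bounded cell $\{x>2,\ y>x,\ z>x\}\cap\sF^2$, is admissible and has no zero on the cell. Finding such a field, however, requires information your plan does not supply.

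What is missing is an argument that does not depend on exhibiting a particular field. By Poincar\'e--Hopf for fields with prescribed inward/outward behaviour on the facets of a convex polytope, $\deg(C_i,C_j^\vee)=0$ if and only if the following union has Euler characteristic $1$: the union of the facets of $\overline{C_i}\cap\sF^k$ lying on hyperplanes $H\notin\sep(C_i,C_j^\vee)=\sep(C_i,C_j)\,\triangle\,(\sA\setminus\sA_{X(C_j)})$. One must prove this from $C_i\neq C_j$ and $\dim X(C_i)\le \dim X(C_j)$, using a description of the facets of $\overline{C_i}\cap\sF^k$ (walls through $X(C_i)$ versus end caps). That is the content supplied by \cite{BY16}.

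Your $k=1$ criterion is correct but left unverified. The proposed induction via slices has no mechanism, since the degree on $\partial(\overline{C_i}\cap D)$ is not computed slicewise.
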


\subsubsection{The map $\Delta$}

To define the map $\Delta'$, we need to define the map $\Delta(C,C')\in \widetilde{R}$ for chambers $C,C'\in \ch(\sA)$, where $\widetilde{R}$ is the extended ring of $R$.  
For more details, see\cite[Section 3.3, Section 6.4]{Yos07} or \cite[Section 7.1]{Yos26}. 

\begin{defn} 
For a commutative ring with identity, define the extended ring as follows
$$\widetilde{R}:=R[x_a \mid a\in R^\times]/(x^2_a-a;a\in R^\times).$$
\end{defn}

\begin{prop}[{\cite[Lemma 7.1]{Yos26}}]
\begin{enumerate}
\item[] 
\item The natural map $i:R\rightarrow \widetilde{R}$ is injective.
\item $x_a\in \widetilde{R}^\times$.
\end{enumerate}
\end{prop}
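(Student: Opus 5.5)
The statement has two parts. For the injectivity of $i$, I would show that $\widetilde{R}$ is a free $R$-module that has $1$ as one of its basis elements. The unit claim will then be a one-line computation.

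First I would treat a single variable. For $a\in R^\times$, the ring $R_a:=R[x_a]/(x_a^2-a)$ is a quotient of a polynomial ring by a monic polynomial of degree $2$. Division with remainder by a monic polynomial works over any commutative ring. Hence every class has a unique representative $r_0+r_1x_a$, and $R_a$ is free over $R$ with basis $\{1,x_a\}$.

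Next I would pass to finitely many variables. For a finite subset $S\subset R^\times$, let $R_S:=R[x_a\mid a\in S]/(x_a^2-a;\,a\in S)$. This is isomorphic to the tensor product $\bigotimes_{a\in S,\,R}R_a$, since a tensor product of quotients of polynomial rings in disjoint variables is the quotient of the joint polynomial ring by the union of the ideals. A finite tensor product of free modules is free. So $R_S$ is free over $R$ with basis the monomials $x_T:=\prod_{a\in T}x_a$ for $T\subseteq S$, and $x_\emptyset=1$.

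Then I would take the colimit over finite subsets. For $S\subseteq S'$, the natural map $R_S\to R_{S'}$ sends each basis monomial $x_T$ to the basis monomial $x_T$ of $R_{S'}$. It is therefore a split injection that preserves bases. Any element of $R[x_a\mid a\in R^\times]$, and any finite relation among such elements, involves only finitely many variables. Hence $\widetilde{R}=\varinjlim_S R_S$, taken over the directed system of finite subsets. A directed colimit of free modules along basis-preserving inclusions is free on the union of the bases. So $\widetilde{R}$ is free over $R$ with basis $\{x_T\}$, where $T$ ranges over finite subsets of $R^\times$.

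The map $i$ sends $r\mapsto r\cdot x_\emptyset$, and $x_\emptyset=1$ is a basis element, so $i$ is injective. This proves part (1).

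For part (2), $a$ is a unit in $R$, so $a^{-1}\in R\subset\widetilde{R}$. Using $x_a^2=a$ we get $x_a\cdot(a^{-1}x_a)=a^{-1}x_a^2=a^{-1}a=1$. Hence $x_a\in\widetilde{R}^\times$ with inverse $a^{-1}x_a$.

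The only step that needs care is the identification of $\widetilde{R}$ with the colimit of the $R_S$, that is, checking that no new relations appear in the limit. This follows because each element and each relation involves finitely many variables.
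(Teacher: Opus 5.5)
Your proof is correct and complete: the square-free monomials $x_T$ (with $T\subset R^\times$ finite) form an $R$-basis of $\widetilde{R}$ containing $1=x_\emptyset$, which gives injectivity, and $a^{-1}x_a$ is an explicit inverse of $x_a$. The paper does not prove this statement itself but only cites \cite[Lemma 7.1]{Yos26}, and your freeness-plus-explicit-inverse argument is the standard way to establish it, including the careful check that passing to the directed colimit over finite $S$ introduces no new relations.
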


Recall that $\rho:\pi_1(U)\to R^\times, \gamma_i \mapsto q_i$, where $\gamma_i$ are the meridians of $H_i$.  
Now we construct the representations of the Deligne groupoid $\gal(\sA)$. 
Let $C\in \ch(\sA)$. Define the $R$-submodule $\rho(C) \subset \widetilde{R}$ by 
$$\rho(C):=R \cdot \prod_{H_i\in \sep(C_0,C)} q^{1/2}_i,$$
where $C_0$ is the chamber containing $\sF^0$ and $q^{1/2}_i$ denotes $x_{q_i}$.

\begin{prop}[{\cite[Proposition 7.2]{Yos26}}]

Let $\mathrm{Mod}_R$ be the category of $R$-modules. 
The correspondence 
$$
\begin{aligned}
\rho:\ch(\sA) &\longrightarrow \mathrm{Mod}_R \\
C&\longmapsto \rho(C) \\
(C,C') &\longmapsto x_{q_i},
\end{aligned}
$$
where $C$ and $C'$ are adjacent and separated by $H_i$ and $ (C,C')\in \Hom(C,C')$, gives a representation of the groupoid $\mathrm{Gal}(\sA)$. 
\end{prop}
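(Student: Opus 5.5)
The plan is to verify directly that the assignment respects the presentation of $\gal(\sA)$. First, each generator must be sent to a well-defined isomorphism of $R$-modules. Second, the defining relations $\sim$ of \cite{Yos26} must map to equalities. Then $\rho$ descends to a functor on the quotient category. I will take the equivalence relation to be the one generated by two kinds of moves: (a) any two positive minimal paths (galleries) between the same pair of chambers are identified; (b) the formal inverses cancel, i.e. $(C*_+C'*_-C)\sim \mathrm{id}_C$ and $(C*_-C'*_+C)\sim\mathrm{id}_C$. I expect this to be the main point needing care. One has to check that the relations in \cite{Yos26} are indeed generated by these two moves, for instance by comparing with Deligne's presentation of the groupoid by positive minimal paths and their inverses. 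If that description is confirmed, the rest is a computation.

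\emph{Step 1 (generators).} Let $C,C'$ be adjacent chambers separated by $H_i$. Then $\sep(C_0,C')=\sep(C_0,C)\,\triangle\,\{H_i\}$. Write $g_C=\prod_{H_j\in\sep(C_0,C)}x_{q_j}$, so $\rho(C)=R\cdot g_C$. There are two cases.
\begin{itemize}
\item If $H_i\notin\sep(C_0,C)$, then $x_{q_i}g_C=g_{C'}$.
\item If $H_i\in\sep(C_0,C)$, then $x_{q_i}g_C=x_{q_i}^2g_{C'}=q_ig_{C'}$, using the relation $x_a^2=a$ in $\widetilde R$.
\end{itemize}
In both cases multiplication by $x_{q_i}$ maps $\rho(C)$ into $\rho(C')$, and the map is $R$-linear because $\widetilde R$ is commutative. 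By \cite[Lemma 7.1]{Yos26}, $x_{q_i}\in\widetilde R^\times$, and in the second case $q_i\in R^\times$. Hence the map is bijective onto $\rho(C')$. We send the negative step $C*_-C'$ to the inverse map, namely multiplication by $x_{q_i}^{-1}$. This gives the value of $\rho$ on every signed path as a composition. Note that in $\widetilde R$ each $g_C$ is a unit, so $\rho(C)\cong R$ is free; this uses injectivity of $R\to\widetilde R$ \cite[Lemma 7.1]{Yos26}.

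\emph{Step 2 (relations).} A signed path $(C_0*_{\sigma_1}\cdots*_{\sigma_k}C_k)$ acts as multiplication by $\prod_j x_{q_{i_j}}^{\sigma_j}$, where $H_{i_j}$ is the wall crossed at the $j$-th step. Relation (b) holds immediately, since $x_{q_i}\cdot x_{q_i}^{-1}=1$.

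For relation (a), recall that a positive minimal path from $C$ to $C'$ crosses each hyperplane of $\sep(C,C')$ exactly once and no other hyperplane; this is the standard characterization of galleries of length $\#\sep(C,C')$. Its image is therefore multiplication by $\prod_{H_i\in\sep(C,C')}x_{q_i}$. Since $\widetilde R$ is commutative, this expression is independent of the chosen minimal path.

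Thus $\rho$ is constant on equivalence classes and is compatible with composition. Composition of paths is concatenation, and the composition of the corresponding maps is the product. Therefore $\rho$ is a functor $\gal(\sA)\to\mathrm{Mod}_R$, i.e.\ a representation.

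Finally, as a consistency check, I would confirm that a loop at $C$ around a codimension-two edge is sent to the expected monodromy. Such a loop is the composition of one positive minimal path and the inverse of the opposite one. It is sent to multiplication by $\prod_{H_i\supset X}x_{q_i}^2=\prod_{H_i\supset X} q_i$, so the induced representation of $\pi_1(U,p_C)$ recovers $\rho_\sL$.
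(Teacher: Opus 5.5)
The paper gives no proof of this statement; it quotes it from \cite[Proposition 7.2]{Yos26}, so there is no internal argument to compare yours against.

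Your direct verification is correct. The case split $H_i\notin\sep(C_0,C)$ versus $H_i\in\sep(C_0,C)$ handles the generators. Together with $x_{q_i}^2=q_i\in R^\times$ and $x_{q_i}\in\widetilde R^\times$, it shows that multiplication by $x_{q_i}$ is an $R$-isomorphism $\rho(C)\to\rho(C')$. For the relations, you use that a positive minimal gallery from $C$ to $C'$ crosses exactly the hyperplanes of $\sep(C,C')$, each once. So it acts by $\prod_{H\in\sep(C,C')}x_{q_H}$, which is independent of the gallery because $\widetilde R$ is commutative.

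The caveat you raise about the exact form of $\sim$ in \cite{Yos26} is harmless. The relations might be imposed for every pair of positive minimal galleries with the same endpoints, as in Deligne. Or they might be imposed only for the two minimal galleries around each codimension-two face, as in the $2$-cells of the Salvetti complex. The latter are special cases of your move (a), so your Step 2 covers either presentation.

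There is one slip, in the closing consistency check only. As written, your loop is a positive minimal path from $C$ to the chamber opposite to $C$ with respect to $X$, followed by the inverse of the other positive minimal path between the same two chambers. That loop is trivial in $\gal(\sA)$ by relation (a), and your $\rho$ sends it to $1$, not to $\prod_{H_i\supset X}x_{q_i}^2$.

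The meridian around $X$ is instead a positive minimal path to the opposite chamber followed by a positive minimal path back to $C$. This composite crosses each $H_i\supset X$ twice in the positive direction, so it is the loop sent to $\prod_{H_i\supset X}x_{q_i}^2=q_X$. This does not affect the proof of the proposition itself.
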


In the following discussion, we denote $\prod_{H_i\in \sep(C_0,C)}q_i$ by $q_{\sep(C_0,C)}$. 

\begin{defn}
Let $C,C'\in\ch(\sA)$. Define the map $\Delta$ by 
$$\Delta(C,C')=q^{1/2}_{\sep(C,C')}-q^{-1/2}_{\sep(C,C')}\in \widetilde{R}.$$
\end{defn}

\subsubsection{The map $\Delta'$ and the chamber cochain complex}

Using the above notions, we have the following result proved by Yoshinaga. 
\begin{theorem}[{\cite[Theorem 6.4.1]{Yos07},\cite[Proposition 7.5]{Yos26}}]
\label{local_cohomology}
Let $\rho:\pi_1(U)\rightarrow R^\times$ be the representation  corresponding to a rank one $R$-local system $\sL$.
Let
$$\sC^k(\sA,\rho):=\bigoplus_{C\in \ch^k(\sA)}\rho(C).$$
Define the map $d_\sL:\sC^k(\sA,\rho)\rightarrow \sC^{k+1}(\sA,\rho)$ by 
$$d_\sL(y)=
\sum_{C'\in\ch^{k+1}(\sA)} \deg(C,C') \cdot \Delta(C,C')\cdot y,
$$
where $y\in\rho(C)$. Then $(\sC^\bullet(\sA,\rho),d_\sL)$ gives a cochain complex such that  
$$H^k(\sC^\bullet(\sA,\rho),d_\sL)\cong H^k(U,\sL_\rho) $$
for any $k$ .
\end{theorem}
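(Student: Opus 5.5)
The plan is to realize the complex $(\sC^\bullet(\sA,\rho),d_\sL)$ as the twisted cellular cochain complex of a minimal CW structure on $U$ adapted to the flag $\sF$, and then to identify the incidence coefficients with $\deg(C,C')\cdot\Delta(C,C')$.

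\textbf{Step 1: a minimal filtration of $U$.} Put $U_k=U\cap(\sF^k)^\C$. Since $\sF$ is generic, the Lefschetz hyperplane theorem for affine arrangement complements (via Morse theory on the real part, or Hamm's theorem) gives that $(U_k,U_{k-1})$ is homotopy equivalent to a relative CW pair with only $k$-cells. The number of such cells is $b_k$, which equals $\#\ch^k(\sA)$ by Proposition~\ref{betti_number}. This gives a minimal CW complex homotopy equivalent to $U$, and I will index its $k$-cells by the chambers $C\in\ch^k(\sA)$. Concretely, the $k$-cell attached to $C$ is the twisted real cell
$$\sigma_C=\{x+\sqrt{-1}\,\epsilon V(x)\mid x\in\overline{C}\cap D^k\}.$$
Here $V$ is a vector field on $\sF^k$ that points outward across the walls of $C$ and inward on $\partial D$. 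For $\epsilon>0$ small, $\sigma_C$ lies in $U_k$ except along its boundary, which is pushed into $U_{k-1}$ after a deformation. I would verify that these cells form a basis of $H_k(U_k,U_{k-1};\Z)$. The key input is that every chamber in $\ch^k(\sA)$ meets $\sF^k$ but not $\sF^{k-1}$.

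\textbf{Step 2: twisted cochains.} For a rank one local system $\sL_\rho$, the cochain group $\Hom(H_k(\widetilde U_k,\widetilde U_{k-1}),\rho)$ is a direct sum over cells. To trivialize $\sL$ over $\sigma_C$, I choose the base point $p_C\in C$. The Deligne groupoid representation $\rho(C)=R\cdot q^{1/2}_{\sep(C_0,C)}$ then canonically identifies the stalk at $p_C$ with a submodule of $\widetilde R$, in a way compatible with parallel transport along real paths that cross walls through the upper half complexification. This identifies the cochain group with $\sC^k(\sA,\rho)$.

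\textbf{Step 3: computing the coboundary (the main obstacle).} The coefficient of $C'$ in $d_\sL$ applied to $C$ is a twisted intersection number between $\partial\sigma_{C'}$ and the dual of $\sigma_C$ inside $U_k$. I would compute it by deforming the boundary of $\sigma_{C'}$ radially toward $\sF^k$, using the vector field $V^{C'}$, which points toward $C'$ on walls.

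In the untwisted case the local contribution over $C$ is the mapping degree of $V^{C'}/|V^{C'}|$ on $\partial(\overline C\cap D)$, which is $\deg(C,C')$. Nontrivial twisting enters because a lift of the cell lands in the stalk over $C$ along two homotopically different routes, passing above or below the separating hyperplanes, that is, through the sheets $x\pm\sqrt{-1}\epsilon V$. Their monodromies differ by $q_{\sep(C,C')}$. After the symmetric normalization by square roots, the difference is
$$q^{1/2}_{\sep(C,C')}-q^{-1/2}_{\sep(C,C')}=\Delta(C,C').$$
The delicate points are checking that this two-route description is exact, that the orientations and signs match $\deg$, and that the result is independent of the choice of $V^{C'}$. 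As a consistency check, I would verify $d_\sL^2=0$ directly in low dimensions against Proposition~\ref{deg_chamber}.

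\textbf{Step 4: conclusion.} Cellular cohomology with local coefficients of this CW structure is $H^*(U,\sL_\rho)$, and by Steps 2 and 3 its cochain complex is $(\sC^\bullet(\sA,\rho),d_\sL)$, which proves the theorem. Since every construction is carried out over $\widetilde R$ with an injective inclusion $R\hookrightarrow\widetilde R$, the argument works for an arbitrary commutative ring $R$.
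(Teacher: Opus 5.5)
The paper does not prove this statement; it cites Yoshinaga. Your outline follows the same architecture as that source: a minimal CW structure adapted to $\sF$, $k$-cells indexed by $\ch^k(\sA)$, and a coboundary read off from the degree of $V^{C'}$ and half-monodromies $q^{\pm 1/2}$. As a proof, though, it has a concrete error in Step 1 and leaves the actual content of the theorem unproved in Step 3.

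\textbf{Step 1.} Your cells $\sigma_C$ are not a basis once $C\in\uch^k(\sA)$. Take $k=1$ and let $C$ be the unbounded chamber of $\sA\cap\sF^1$ on the side away from $\sF^0$, so $\overline{C}\cap D=[h,R]$. ``Outward across the wall'' at $h$ and ``inward on $\partial D$'' at $R$ point the same way, so you may take $V$ constant. Then $\sigma_C$ is a horizontal segment off the real axis. It meets no chamber of $\ch^1(\sA)$ and is null in $H_1(U_1,U_0)$. In general such a $V$ need not have total index $\pm1$ on an unbounded $\overline{C}\cap D$. To repair this, take $V$ outward on all of $\partial(\overline{C}\cap D)$, or use normal disks $p_C+\sqrt{-1}\,B_\epsilon$ with the real chambers $C\cap\sF^k$ as dual cycles. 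The condition ``inward on $\partial D$'' belongs to $V^{C'}$ in the definition of $\deg$, not to the $k$-cells.

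\textbf{Step 3.} This step is the whole theorem, and it is the serious gap. You need to prove a geometric lemma with two parts:
\begin{enumerate}
\item After deformation, inside $U_k$ and relative to the part retracting onto $U_{k-1}$, the attaching sphere of the $(k+1)$-cell of $C'$ is the union of the sheets $\Gamma_\pm=\{x\pm\sqrt{-1}\,\epsilon V^{C'}(x)\}$ over $\sF^k\cap D$, joined through the $U_{k-1}$ side.
\item The section of $\sL$ carried by $\Gamma_+$ (resp.\ $\Gamma_-$) over $C$ is the transport from the cell's centre in $C'$ along a positive (resp.\ negative) minimal gallery crossing $\sep(C,C')$.
\end{enumerate}
You construct neither the $(k+1)$-cell nor the retraction, so this lemma is assumed rather than proved.

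The sign bookkeeping in that lemma is not automatic either. Oriented as graphs over $\sF^k$, $\Gamma_+$ and $\Gamma_-$ meet $C$ with total intersection numbers differing by $(-1)^k$. They must therefore enter the attaching sphere with relative sign $(-1)^{k+1}$ to produce the difference $q^{1/2}_{\sep(C,C')}-q^{-1/2}_{\sep(C,C')}$ rather than a sum. For $k=1$, the attaching loop must traverse both sheets in the same direction. Closing them up as $\Gamma_+$ followed by $\Gamma_-$ reversed gives $\pm2\deg(C,C')$ in the untwisted case.

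For the same reason, your sentence that the untwisted local contribution is $\deg(C,C')$ cannot stand as written. Since $\#\ch^k(\sA)=b_k$, all untwisted differentials of the minimal complex vanish. $\deg(C,C')$ enters only as the common multiplicity of two cancelling sheets. Until this lemma is established (checking $d_\sL^2=0$ in low dimensions does not do it), Steps 2 and 4 only restate the formula you are trying to prove.
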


Now we modify the cochain complex $(\sC^\bullet(\sA,\rho),d_\sL)$. 
Note that if we take $y=q^{1/2}_{\sep(C_0,C)}=\prod_{H_i\in \sep(C_0,C)} q^{1/2}_i$, then 
$$\begin{aligned}
\Delta(C,C')\cdot y
&=\left( q^{1/2}_{\sep(C,C')}-q^{-1/2}_{\sep(C,C')} \right) 
\cdot q^{1/2}_{\sep(C_0,C)}\\
&=\left( 1-q^{-1}_{\sep(C,C')} \right) \cdot q^{1/2}_{\sep(C,C')}\cdot q^{1/2}_{\sep(C_0,C)} \cdot q^{-1/2}_{\sep(C_0,C')} \cdot q^{1/2}_{\sep(C_0,C')}
\end{aligned}$$
 
Note that $ q^{1/2}_{\sep(C,C')}\cdot q^{1/2}_{\sep(C_0,C)} \cdot q^{-1/2}_{\sep(C_0,C')}\in R^\times$. 
In fact, 
$$\begin{aligned}
& q^{1/2}_{\sep(C,C')}\cdot q^{1/2}_{\sep(C_0,C)} \cdot q^{-1/2}_{\sep(C_0,C')} \\
=&q^{1/2}_{\sep(C_0,C)}\cdot q^{1/2}_{\sep(C,C')}\cdot q^{-1/2}_{\sep(C_0,C')} \\
\in& \rho ( \Hom(C',C_0)\circ \Hom(C,C') \circ \Hom(C_0,C)) \\
\subset &\rho(\Hom(C_0,C_0))=\rho(\pi_1(U))\subset R^\times.
\end{aligned}$$
Furthermore, it is easy to see that $q^{1/2}_{\sep(C,C')}\cdot q^{1/2}_{\sep(C_0,C)} \cdot q^{-1/2}_{\sep(C_0,C')}=q_{\sep(C,C') \cap \sep(C_0,C)}$.

The above result shows that 
$\Delta(C,C')\cdot q^{1/2}_{\sep(C_0,C)} \cdot q^{-1/2}_{\sep(C_0,C')}  \in R$. 
Let 
$$\Delta'(C,C')=\Delta(C,C')\cdot q^{1/2}_{\sep(C_0,C)} \cdot q^{-1/2}_{\sep(C_0,C')}.$$
Then for any chambers $C,C'$, we have $\Delta'(C,C')\in R$.

If we denote $q^{1/2}_{\sep(C_0,C)}$ by a formal symbol $[C]$, then $\rho(C)=R\cdot[C]$ and 
$$\sC^k(\sA,\rho)=\bigoplus_{C\in \ch^k(\sA)}R\cdot[C].$$

Let 
$$
d'^k_\sL([C]) =\sum_{C'\in \ch^{k+1}(\sA)} \deg(C,C')\cdot  \Delta'(C,C')\cdot [C'].
$$
Then $d^k_\sL([C])=d^k_\sL(q^{1/2}_{\sep(C_0,C)})=d'^k_\sL([C])$.

By Theorem \ref{local_cohomology}, we obtain the following corollary.  
\begin{cor}\label{local_cohomology_cor}
With the above notations, $(R[\ch^\bullet(\sA)],d'_\sL)$ is a cochain complex.
Furthermore, 
$$H^k(R[\ch^\bullet(\sA)],d'_\sL)\cong H^k(U,\sL)$$
as $R$-modules for any $k$.
\end{cor}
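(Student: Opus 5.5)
The plan is to exhibit an explicit isomorphism of cochain complexes of $R$-modules
$$\Phi^\bullet:(R[\ch^\bullet(\sA)],d'_\sL)\longrightarrow(\sC^\bullet(\sA,\rho),d_\sL).$$
Both assertions of the corollary then follow from Theorem \ref{local_cohomology}. Since $\Phi$ is a bijection commuting with the differentials, we get $d'_\sL\circ d'_\sL=\Phi^{-1}\circ d_\sL\circ d_\sL\circ\Phi=0$, so $(R[\ch^\bullet(\sA)],d'_\sL)$ is a complex. Moreover $\Phi$ induces $R$-module isomorphisms $H^k(R[\ch^\bullet(\sA)],d'_\sL)\cong H^k(\sC^\bullet(\sA,\rho),d_\sL)\cong H^k(U,\sL)$.

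\textbf{Step 1: $\Phi^k$ is an isomorphism in each degree.} Define $\Phi^k$ on the basis by $[C]\mapsto y_C:=q^{1/2}_{\sep(C_0,C)}\in\rho(C)$ and extend $R$-linearly. It suffices to check that for each chamber $C$ the map $R\to\rho(C)=R\cdot y_C$, $r\mapsto r y_C$, is bijective. Surjectivity holds by definition. For injectivity, $y_C$ is a product of the elements $x_{q_i}$, which are units of $\widetilde R$ by \cite[Lemma 7.1]{Yos26}. Hence $r y_C=0$ forces $i(r)=0$ in $\widetilde R$, and therefore $r=0$ because $i:R\to\widetilde R$ is injective. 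So each $\rho(C)$ is free of rank one on $y_C$, and $\Phi^k$ is an isomorphism $\bigoplus_{C\in\ch^k(\sA)}R[C]\cong\sC^k(\sA,\rho)$.

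\textbf{Step 2: $\Phi$ commutes with the differentials.} It is enough to check this on the basis elements $[C]$, since both differentials are $R$-linear. By the definition of $\Delta'$,
$$\Delta(C,C')\,y_C=\Delta'(C,C')\,q^{1/2}_{\sep(C_0,C')}=\Delta'(C,C')\,y_{C'}.$$
Hence
$$d_\sL(\Phi[C])=\sum_{C'\in\ch^{k+1}(\sA)}\deg(C,C')\Delta'(C,C')\,y_{C'}=\Phi\bigl(d'_\sL[C]\bigr).$$
For this to make sense, the coefficients $\Delta'(C,C')$ must lie in $R$, so that the right-hand side is the image under $\Phi$ of an element of $R[\ch^{k+1}(\sA)]$.

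\textbf{Main obstacle.} The only substantive point is precisely that $\Delta'(C,C')\in R$. I would take this from the computation preceding the corollary. There, $\Delta'(C,C')=(1-q^{-1}_{\sep(C,C')})\cdot u$ with $u=q^{1/2}_{\sep(C,C')}q^{1/2}_{\sep(C_0,C)}q^{-1/2}_{\sep(C_0,C')}$. This $u$ is the image under the groupoid representation of a loop at $C_0$, so it lies in $\rho(\pi_1(U))\subset R^\times$. Concretely, $u=q_{\sep(C,C')\cap\sep(C_0,C)}$, because each hyperplane in the symmetric-difference pattern of the three separating sets cancels its half-powers, while each hyperplane in $\sep(C,C')\cap\sep(C_0,C)$ contributes a full $q_i$.
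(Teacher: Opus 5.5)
Your proposal is correct and follows the same route as the paper: identify $[C]$ with $q^{1/2}_{\sep(C_0,C)}$, use $\Delta(C,C')\,[C]=\Delta'(C,C')\,[C']$ with $\Delta'(C,C')\in R$, and transfer Theorem \ref{local_cohomology} across the resulting isomorphism of complexes. Your Step 1 (freeness of $\rho(C)$ on $q^{1/2}_{\sep(C_0,C)}$, using that the $x_a$ are units of $\widetilde{R}$ and $i$ is injective) and your explicit check that $u=q_{\sep(C,C')\cap\sep(C_0,C)}$ simply spell out what the paper leaves implicit.
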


\begin{remark}
If $q_i^{1/2}\in R$ for each $i$, then $\rho(C)=R$ for each $C\in \ch^\bullet(\sA)$.
Then we have another way to construct cochain complex.  
Let $C$ be a symbol, $\rho(C)=R \cdot [C]$ 
and 
$$d^k_\sL([C])=\sum_{C'\in \ch^{k+1}(\sA)}  \deg(C,C')\cdot  \Delta(C,C')\cdot [C'].$$
Then we obtain that $(R[\ch^\bullet(\sA)],d_\sL)$ is a cochain complex and  $H^k(R[\ch^\bullet(\sA)],d_\sL)\cong H^k(U,\sL)$ for any $k$.
As an example, one can see \cite[Theorem 6.4.1]{Yos07} for the case $R=\C$.
Recently, the case $R=\Z$ was considered in \cite{Sug23}. 
\end{remark}

\begin{ex}
In the setting of Example \ref{chamber fig}, 
for the boundary map $d'_\sL: R[\ch^1(\sA)] \to R[\ch^2(\sA)]$, we can compute its corresponding matrix as following, 
\begin{equation*}
\begin{pNiceArray}{cccccc}[first-row,first-col]
 & C_1^\vee & C_4^\vee & C_2^\vee & C_3^\vee & D_1 & D_2 \\
C_1 & (1-q_{12345}^{-1})  q_1 & 0 & (1-q_{125}^{-1})  q_1 & (1-q_{1235}^{-1})  q_1 & (1-q_{12}^{-1})  q_1 & 0  \\
C_4 & 0 & (1-q_{12345}^{-1})  q_{1234} & (1-q_{1345}^{-1})  q_{134} & (1-q_{145}^{-1})  q_{4} & 0 & (1-q_{45}^{-1})  q_{4}  \\
C_2 & 0 & 0 & -(1-q_{15}^{-1})  q_{1} & 0 & -(1-q_{1}^{-1})  q_{1} & 0  \\
C_3 & 0 & 0 & 0 & -(1-q_{15}^{-1})  q_{1} & 0 & -(1-q_{5}^{-1})  \\
C_0^\vee & -(1-q_{1}^{-1})  q_{1} & -(1-q_{1234}^{-1})  q_{1234} & -(1-q_{134}^{-1})  q_{134} & -(1-q_{14}^{-1})  q_{14} & 0 & -(1-q_{4}^{-1})  q_{4}  \\
 
\end{pNiceArray}.
\end{equation*}

\end{ex}

\section{Main results}

In this section, we provide the proof of main theorem of this paper. 
To prove the main theorem, we need the following lemma. 
\begin{lemma}[{\cite[Lemma 3.6]{LX26}}]
\label{TN}
Consider the integer matrix $T_N$ of size $N\times N$:
$$T_N=
\begin{pNiceMatrix}
0 & 1 & 0 & \cdots & 0\\
0 & 0 & 1 & \cdots & 0\\
\vdots & & & & \vdots\\
0 & 0 & 0 & \cdots & 1\\
1 & 0 & 0 & \cdots & 0\\
\end{pNiceMatrix}.
$$ 
For any integer $k$ and any field $\K$, $\rk_\K \big(T_N^k-I_N\big)=N-\gcd(k,N)$, where $I_N$ is the identity matrix and with the convention that $\gcd(0,N)=N$. 

\end{lemma}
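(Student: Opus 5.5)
The statement to prove is Lemma \ref{TN}: $\rk_\K(T_N^k-I_N)=N-\gcd(k,N)$. My plan is to compute the kernel dimension instead of the rank, and then use rank–nullity.

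First I reduce to the case $0\le k<N$. Since $T_N$ is the permutation matrix of the cyclic shift $\sigma: i\mapsto i+1 \pmod N$ (as permutation acting on coordinates), we have $T_N^N=I_N$. So $T_N^k$ depends only on $k \bmod N$, and so does $\gcd(k,N)$. When $k\equiv 0$, $T_N^k-I_N=0$, which has rank $0=N-N$; this matches the convention $\gcd(0,N)=N$. Hence I may assume $1\le k\le N-1$.

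Next I compute the kernel. A vector $v=(v_1,\dots,v_N)\in\K^N$ lies in $\ker(T_N^k-I_N)$ if and only if $(T_N^kv)_i=v_{i+k}=v_i$ for all $i$, with indices taken mod $N$. So $v$ is in the kernel exactly when it is constant on the orbits of the permutation $\sigma^k$ of $\Z_N$. Those orbits are the cosets of the subgroup generated by $k$ in $\Z_N$. That subgroup is $\gcd(k,N)\Z_N$, of order $N/\gcd(k,N)$, so there are exactly $\gcd(k,N)$ orbits. The indicator vectors of the orbits are linearly independent and span the kernel over any field. Therefore $\dim_\K\ker=\gcd(k,N)$, and rank–nullity gives $\rk_\K(T_N^k-I_N)=N-\gcd(k,N)$.

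There is no real obstacle here. The only care needed is that the argument is characteristic-free: it uses only the permutation structure and never diagonalizes $T_N$, which over a field whose characteristic divides $N$ might not be possible. The orbit-counting step, that the cyclic subgroup generated by $k$ has index $\gcd(k,N)$, is the main point, and it is standard.
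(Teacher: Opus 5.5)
Your argument is correct and complete. Note that the paper does not prove this lemma; it only cites \cite[Lemma 3.6]{LX26}, so there is no in-paper argument to compare yours against.

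Your kernel computation is exactly what is needed. Since $(T_N^k v)_i=v_{i+k}$ with indices mod $N$, the kernel of $T_N^k-I_N$ consists of the vectors that are constant on the cosets of $\langle k\rangle=\gcd(k,N)\Z_N$ in $\Z_N$. The indicator vectors of these $\gcd(k,N)$ cosets form a basis of the kernel over any field, and rank--nullity finishes the proof. Your reduction modulo $N$ correctly handles negative $k$, using $T_N^N=I_N$ and $\gcd(k,N)=\gcd(k \bmod N,N)$. It is not strictly necessary, though, because the orbit count also covers $k\equiv 0$: there all orbits are singletons.

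An equivalent formulation, closer to how the lemma is used in the main proof (where $t$ is replaced by $T_N$), identifies $\K^N$ with $\K[t]/(t^N-1)$, with $T_N$ acting as multiplication by $t$ up to transposition. The kernel of multiplication by $t^k-1$ then has dimension $\deg\gcd(t^k-1,t^N-1)=\deg\bigl(t^{\gcd(k,N)}-1\bigr)=\gcd(k,N)$. Both arguments are characteristic-free. Yours is more elementary, since it avoids the polynomial gcd identity.
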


\begin{proof}[Proof of main theorem]

Let $R=\Z[\Z_N]=\Z[t^\pm]/(t^N-1)$. 
Then the map $\varphi$ corresponds to the following representation:
$$\begin{aligned}
\rho_{\varphi}:H_1(U,\Z)  &\longrightarrow \Z[\Z_N]^\times = R^\times \\
[\gamma_i] &\longmapsto  q_i:=t^{\varepsilon_i}.
\end{aligned}$$

Let $\sL$ be the $R$-local system associated to the map $\varphi$. 
Consider the $R$-module cochain complex $(R[\ch^\bullet(\sA)],d'_\sL)$. 
By Corollary \ref{local_cohomology_cor}, we have an isomorphism $H^k(R[\ch^\bullet(\sA)],d'_\sL)\cong H^k(U,\sL)$ as $R$-modules for any $k$ .

Recall that $\bch^{k}(\sA)=\uch^{k+1}(\sA)$ by Proposition \ref{bch_uch_bij}. 
Let $\beta_k=\#\bch^k(\sA)$ for $0 \leq k \leq l$. 
Now fix $k$ and fix an ordering of chambers $\bch^k(\sA) = \{C_1,\dots,C_{\beta_k}\}$ so that
$$\dim(X(C_1))\geq \dim(X(C_2)) \geq \dots \geq \dim(X(C_{\beta_k})).$$
Let $\uch^k(\sA)=\{B_1,\cdots,B_{\beta_{k-1}}\}$ and $\bch^{k+1}(\sA)=\{D_1,\dots,D_{\beta_{k+1}} \}$. 
Then we can fix an ordering of chambers 
$$\begin{aligned}
\ch^k(\sA)&=\{C_1,\dots,C_{\beta_k},B_1,\cdots,B_{\beta_{k-1}} \}, \\
\ch^{k+1}(\sA)&=\{C_1^\vee,\dots,C_{\beta_k}^\vee,D_1,\dots,D_{\beta_{k+1}} \}.
\end{aligned}
$$

Denote $\deg(C,C^\vee)\cdot \Delta'(C,C^\vee)$ by $a_C$.
Then we obtain a matrix associated to $d'^k_\sL$:
\begin{equation}
\begin{pNiceArray}{cccc:ccc}[first-row,first-col]
    &C_1^\vee&C_2^\vee & \cdots &C_{\beta_k}^\vee &D_1  &\cdots &D_{\beta_{k+1}} \\
C_1 &a_{C_1}&* & * &* &\Block{4-3}<\Large>{*} &  &  \\
C_2 &0 & a_{C_2}&* & * &  &  & \\
\vdots &\vdots &\vdots &\ddots &* &  &  & \\
C_{\beta_k} &0 &0 &0 &a_{C_{\beta_k}}  &  &  & \\
\hdottedline
B_1&\Block{3-4}<\Large>{*}& & & & \Block{3-3}<\Large>{*} \\
\vdots&\\
B_{\beta_{k-1}}&

\end{pNiceArray},
\end{equation}
where upper-left submatrix is upper triangular by Proposition \ref{up_triangular}.

Recall that $H^k(U,\sL)\cong H^k(U^\varphi,\Z)$ as $\Z$-modules for any $k$.
Similarly, we have an isomorphism of $\K$-modules $H^k(U,\sL\otimes\K)\cong H^k(U^\varphi,\K)$.
To compute the $\Z$-module $H^k(U,\sL)$, we need to replace $t$ by $T_N$ in the boundary map $d'_\sL$ and denote the new boundary map by $A_k$. 
Then we obtain a $\Z$-coefficient cochain complex (where $R$ is considered as $\Z[T_N]$): 
$$(R[\ch^\bullet(\sA)],A_*): \cdots \longrightarrow R[\ch^{k}(\sA)] \stackrel{A_k}{\longrightarrow} R[\ch^{k+1}(\sA)] \longrightarrow \cdots,$$
such that for any $k$, we have an isomorphism 
$H^k(R[\ch^\bullet(\sA)],A_*)\cong H^k(U^\varphi,\Z)$ as $\Z$-modules.
Similarly, we have an isomorphism $H^k(R\otimes\K[\ch^\bullet(\sA)],A_*\otimes\K)\cong H^k(U^\varphi,\K)$ as $\K$-modules.

Now we analyze the diagonal blocks of the upper-left submatrix of $A_k$.
First, by definition, we have
$$\Delta'(C,C')=\left( 1-q^{-1}_{\sep(C,C')} \right) \cdot q^{1/2}_{\sep(C,C')}\cdot q^{1/2}_{\sep(C_0,C)} \cdot q^{-1/2}_{\sep(C_0,C')}.$$
Note that the part $q^{1/2}_{\sep(C,C')}\cdot q^{1/2}_{\sep(C_0,C)} \cdot q^{-1/2}_{\sep(C_0,C')}\in R^\times$ of $\Delta'(C,C')$ is a power of $t$.
Then 
$$(q^{1/2}_{\sep(C,C')}\cdot q^{1/2}_{\sep(C_0,C)} \cdot q^{-1/2}_{\sep(C_0,C')})(T_N)$$ 
is an invertible matrix with every entry belongs to $\Z$ and its determinant is $\pm1$. 

By Proposition \ref{sep_and_xc}, we have $1-q^{-1}_{\sep(C,C^\vee)}=1-q_{X(C)}=1-t^{\varepsilon_{X(C)}}$. 
Combined with Lemma \ref{TN}, this implies that  
$$\rk_\K(1-q_{X(C)})(T_N)=N-\gcd(\varepsilon_{X(C)},N).$$
Recall from Proposition \ref{deg_chamber} that $\deg(C,C^\vee)=(-1)^{l-1-\dim X(C)} \neq 0$. 
Thus, we have:
$$\begin{aligned}
\rk_\K A_k
&\geq \sum_{C\in \bch^k(\sA)}\rk_\K(\deg(C,C^\vee)\cdot\Delta(C,C^\vee))(T_N) \\
&=\sum_{C\in \bch^k(\sA)}\rk_\K(1-q_{X(C)})(T_N) \\
&=\sum_{C\in \bch^k(\sA)}(N-\gcd(\varepsilon_{X(C)},N))\\
&=N\#\bch^k(\sA)-\sum_{C\in \bch^k(\sA)}\gcd(\varepsilon_{X(C)},N) 
\end{aligned}$$

Note that $X(C)=X(C^\vee)$ and $\#\bch^{k-1}(\sA)=\#\uch^{k}(\sA)$ for $0\leq k\leq l-1$, then we obtain the following inequality:
$$\begin{aligned}
\label{Hkinequ}
\dim_\K H^k(U^\varphi,\K)
&=\dim_\K \frac{\ker A_{k}}{\im A_{k-1}} \\
&=\dim_\K \ker A_k-\dim_\K \im A_{k-1} \\
&=\dim_\K R\otimes\K[\ch^k(\sA)]- \dim_\K \im A_k-\dim_\K \im A_{k-1} \\
&=N\#\ch^k(\sA) -\rk_\K A_k- \rk_\K A_{k-1} \\
&\leq N\#\ch^k(\sA) - N\#\bch^{k}(\sA) - N\#\bch^{k-1}(\sA) \\
&\quad\quad
+\sum_{C\in \bch^k(\sA)}\gcd(\varepsilon_{X(C)},N) 
+\sum_{C\in \bch^{k-1}(\sA)}\gcd(\varepsilon_{X(C)},N) \\
&=\sum_{C\in \ch^k(\sA)}\gcd(\varepsilon_{X(C)},N) .
\end{aligned}$$
 
Similarly for $k=l$, we have 
$$\begin{aligned}
\dim_\K H^l(U^\varphi,\K)
&=\dim_\K \frac{R\otimes\K[\ch^l(\sA)]}{\im A_{l-1}}\\
&=\dim_\K R\otimes\K[\ch^l(\sA)] - \dim_\K \im A_{l-1}\\
&\leq N\#\ch^l(\sA)-N\#\bch^{l-1}(\sA)+\sum_{C\in \bch^{l-1}(\sA)}\gcd(\varepsilon_{X(C)},N) \\
&=N\#\ch^l(\sA)-N\#\uch^{l}(\sA)+\sum_{C\in \uch^{l}(\sA)}\gcd(\varepsilon_{X(C)},N)\\
&=N\#\bch^{l}(\sA)+\sum_{C\in \uch^{l}(\sA)}\gcd(\varepsilon_{X(C)},N)\\
&=N\cdot (-1)^l \chi(U)+\sum_{C\in \uch^{l}(\sA)}\gcd(\varepsilon_{X(C)},N)\\
\end{aligned}$$
where $\#\bch^{l}(\sA)=(-1)^l \chi(U)$ by Proposition  \ref{betti_number}.
Then we prove the inequality in the theorem. 

Furthermore, if the map $\varphi$ satisfies that $\gcd(\varepsilon_X,N)=1$ for each dense edge $X\subset \overline{H}_\infty$, 
Then $\gcd(\varepsilon_{X(C)},N)=1$ for each unbounded chamber $C$ by Proposition \ref{chamber_dense_edge}. 
Then we obtain the following inequality for $0\leq k\leq l-1$:
$$\begin{aligned}
\rk_\K A_k(T_N)&\geq N\#\bch^k(\sA)-\sum_{C\in \bch^k(\sA)}\gcd(\varepsilon_{X(C)},N)\\
&=(N-1)\#\bch^k(\sA),\\
\dim_\K H^k(U^\varphi,\K) &\leq\sum_{C\in \ch^k(\sA)}\gcd(\varepsilon_{X(C)},N) \\
&=\#\ch^k(\sA)=b_k,\\
\dim_\K H^l(U^\varphi,\K) &\leq N\#\bch^{l}(\sA)+\sum_{C\in \uch^{l}(\sA)}\gcd(\varepsilon_{X(C)},N)\\
&=N\#\bch^{l}(\sA)+\#\uch^{l}(\sA)\\
&=\#\ch^{l}(\sA)+(N-1)\#\bch^{l}(\sA)\\
&=b_l+(N-1)(-1)^l\chi(U) .
\end{aligned}$$

For any field $\K$, by the Universal Coefficient Theorem and \cite[Section 3.4]{Suc24}, we have
$$\dim_\K H^k(U^\varphi,\K)\geq \dim_\C H^k(U^\varphi,\C) \geq \dim_\C H^k(U,\C) =b_k.$$ 
Then for $0\leq k\leq l-1$, by this inequality, we have $\dim_\K H^k(U^\varphi,\K)=b_k(U)$. 
For $k=l$, since $\chi(U^\varphi)=N\cdot \chi(U)$, we obtain
$$\begin{aligned}
(-1)^l\dim_\K H^l(U^\varphi,\K)&= N\cdot\chi(U)-\sum_{0\leq k\leq l-1}(-1)^k\dim_\K H^k(U^\varphi,\K)\\
&=N\cdot\chi(U)-\sum_{0\leq k\leq l-1}(-1)^k b_k\\
&=(N-1)\chi(U)+(-1)^l b_l.
\end{aligned}$$
Since the above formula is true for any field $\K$, hence \begin{equation*}
H^k(U^\varphi,\Z)=\left\{
\begin{aligned}
&\Z^{b_l+(N-1)(-1)^l\chi(U)} &(i=l), \\
&\Z^{b_k} & (0\leq k\leq l-1), \\
&0 & (\text{otherwise}), 
\end{aligned}
\right.
\end{equation*}
by the Universal Coefficient Theorem.

\end{proof}

\begin{remark}

For the case $l=2$ and $\sA$ has $n$ lines, note that $\#\ch^1(\sA)=n$.
Consider the associated arrangement $\overline{\sA}$ in $\CP^2$. 

If $X\in L_2(\overline{\sA})$ such that $X=X(C)$ for some $C\in \ch^1(\sA)$, then there exist two lines $H,H'\in\sA$ such that $H,H'$ are parallel and $H\cap\overline{C}\neq \emptyset, H'\cap\overline{C}\neq \emptyset$. 
In fact, we have 
$$\#\{C\in \ch(\sA)\mid X(C)=X\}=n_X-2,$$
where $n_X$ is the number of the lines which contains $X$. 

Note that all dense edges in $\overline{H}_\infty$ are either points or $\overline{H}_\infty$. 
Assume that $\overline{H}_\infty$ contains $s$ intersection points. 
Let $D(\overline{\sA})$ be the set of dense edges contained in $\overline{H}_\infty$. 
Then we have 
$$n=\sum^s_{\substack{X\in L_2(\overline{\sA})\\X\subset \overline{H}_\infty}} (n_X-1)
=s+\sum^s_{\substack{X\in L_2(\overline{\sA})\\X\subset \overline{H}_\infty}} (n_X-2)
=s+\sum_{X\in D(\overline{\sA})\backslash\{\overline{H}_\infty\}}(n_X-2)$$

Using the above two equalities, we obtain that $\#\{C\in \ch^1(\sA) \mid X(C)=\overline{H}_\infty\}=s$.
Then by inequality \ref{Hkinequ}, we obtain 
$$
\begin{aligned}  
\dim_\K(H^1(U,\sL))&\leq \sum_{C\in \ch^1(\sA)}\gcd(\varepsilon_{X(C)},N)\\
&=n+\sum_{C\in \ch^1(\sA)} (\gcd(\varepsilon_{X(C)},N)-1 )\\
&=n+\sum_{X\in D(\overline{\sA})\backslash\{\overline{H}_\infty\}}(n_X-2)(\gcd(\varepsilon_{X},N)-1 )
+\sum_{\substack{C\in \ch^1(\sA)\\ X(C)=\overline{H}_\infty}}
(\gcd(\varepsilon_{\overline{H}_\infty},N)-1 )\\
&=n+\sum_{X\in D(\overline{\sA})\backslash\{\overline{H}_\infty\}}(n_X-2)(\gcd(\varepsilon_{X},N)-1 )
+s(\gcd(\varepsilon_{\overline{H}_\infty},N)-1 )
.\end{aligned}
$$

If $\gcd(\varepsilon_{\overline{H}_\infty},N)=1$, 
this result agree with Theorem \ref{line_arrangement_case}. 
Similarly, if $\varphi$ satisfies that $\gcd(\varepsilon_X,N)=1$ for each dense edge $X\subset \overline{H}_\infty$, we also have $b_1(U^\varphi,\Z)=n$ and $H_1(U^\varphi,\Z)$ is torsion-free. 

\end{remark}

\end{document}